\pdfoutput=1
\documentclass[psamsfonts, reqno, 12pt]{amsart}
\usepackage{amsfonts,amsmath, amsthm, amssymb, latexsym, epsfig}
\usepackage[all]{xy}
\usepackage{xspace}
\usepackage{enumerate}
\usepackage{hyperref}
\usepackage[mathscr]{eucal}

        \topmargin 30pt
        \advance \topmargin by -\headheight
        \advance \topmargin by -\headsep

        \textheight 8.2in
        
        \oddsidemargin 40pt
        \evensidemargin \oddsidemargin
        \marginparwidth 0.5in

        \textwidth 5.5in

        




\RequirePackage{amsfonts}
\RequirePackage{amsmath}
\RequirePackage{amsthm}
\RequirePackage{amssymb}
\RequirePackage{latexsym}
\RequirePackage{epsfig}
\RequirePackage{verbatim}
\RequirePackage[mathscr]{eucal}
\RequirePackage[all]{xy}

\newenvironment{smallpmatrix}{%
  \left(\begin{smallmatrix}}{\end{smallmatrix}\right)}

        \newcommand{\cstar}{\mbox{$C^*$}}
        \newcommand{\ror}{\mathcal R}

        \newcommand{\ftn}[3]{#1 : #2 \longrightarrow #3}
        \newcommand{\setof}[2]{\ensuremath{\left\{ #1 \: : \: #2 \right\}}}

        \newcommand{\ksix}{\ensuremath{K_{\mathrm{six}}}\xspace}
        
        \newcommand{\mc}[1]{\mathcal{#1}}
        
        \newcommand{\msf}[1]{\mathsf{#1}}
        \newcommand{\mrm}[1]{\mathrm{#1}}

        \newcommand{\mbb}[1]{\mathbb{#1}}
        \newcommand{\mfk}[1]{\mathfrak{#1}}
        
        \newcommand{\Z}{\ensuremath{\mathbb{Z}}\xspace}
        \newcommand{\C}{\ensuremath{\mathbb{C}}\xspace}

        \newcommand{\kk}{\ensuremath{\mathit{KK}}\xspace}

        \newcommand{\kkE}{\ensuremath{\mathit{KK}_{\mathcal{E}}}\xspace}
        \newcommand{\tksix}{\ensuremath{\underline{\mathit{K}}_{\mathrm{six}}}\xspace}
        \newcommand{\tk}{\ensuremath{\underline{\mathit{K}}}\xspace}

        \newcommand{\id}{\ensuremath{\operatorname{id}}}
        
        \newcommand{\diagA}[8]
        {\xymatrix
        {       & \ar[d]                                & \ar[d]                        & \ar[d]                                &       \\
        \ar[r] &        #1 \ar[r] \ar[d]                & #2 \ar[r] \ar[d]              & #3 \ar@{=}[d] \ar[r]          &       \\
        \ar[r]  &       #4 \ar[r] \ar[d]                & #5 \ar[r] \ar[d]              & #6 \ar[d] \ar[r]                      &       \\
        \ar[r]  &       #7 \ar@{=}[r] \ar[d]    & #8 \ar[r] \ar[d]      & 0 \ar[r] \ar[d]                       &       \\
                &                                       &                               &                                       & 
        }
        }
        
        \newcommand{\diagB}[8]
        {\xymatrix
        {       & \ar[d]                                        & \ar[d]                        & \ar[d]                                        &       \\
        \ar[r] &        #1 \ar@{=}[r] \ar[d]            & #2 \ar[r] \ar[d]              & 0 \ar[d] \ar[r]                               &       \\
        \ar[r]  &       #3 \ar[r] \ar[d]                        & #4 \ar[r] \ar[d]              & #5 \ar@{=}[d] \ar[r]                  &       \\
        \ar[r]  &       #6 \ar[r] \ar[d]                        & #7 \ar[r] \ar[d]      & #8 \ar[r] \ar[d]                              &       \\
                &                                               &                               &                                               & 
        }
        }
        
        \newcommand{\diagC}[8]
        {\xymatrix
        {       & \ar[d]                                        & \ar[d]                                & \ar[d]                                &       \\
        \ar[r] &        #1 \ar[r] \ar@{=}[d]            & #2 \ar[r] \ar[d]                      & #3 \ar[d] \ar[r]                      &       \\
        \ar[r]  &       #4 \ar[r] \ar[d]                        & #5 \ar[r] \ar[d]                      & #6 \ar[d] \ar[r]                      &       \\
        \ar[r]  &       0 \ar[r] \ar[d]                 & #7 \ar@{=}[r] \ar[d]  & #8 \ar[r] \ar[d]                      &       \\
                &                                               &                                       &                                       & 
        }
        }

\newcommand{\ourdd}[2]{\mbb{I}_{#1}^{#2}}
        
\theoremstyle{plain}
        \newtheorem{thm}{Theorem}
        \newtheorem{lemma}[thm]{Lemma}
        \newtheorem{theorem}[thm]{Theorem}
        \newtheorem{proposition}[thm]{Proposition}

        \theoremstyle{definition}
        \newtheorem{definition}[thm]{Definition}
        
        \newtheorem{remark}[thm]{Remark}
        
        \newtheorem{example}[thm]{Example}

        \numberwithin{equation}{section}
        \numberwithin{figure}{section}

\begin{document}
        \title{Nonsplitting in Kirchberg's ideal-related $\kk$-theory}
        \author{S{\o}ren Eilers}
        \author{Gunnar Restorff}
        \address{Department of Mathematical Sciences \\
        University of Copenhagen\\
        Universitetsparken~5 \\
        DK-2100 Copenhagen, Denmark}
        \email{eilers@math.ku.dk}       
        \email{restorff@math.ku.dk } 
       \author{Efren Ruiz}
        \address{Department of Mathematics \\
        University of Hawaii Hilo \\
        200 W. Kawili St. \\
        Hilo, Hawaii 96720 USA}
        \email{ruize@hawaii.edu}
        \date{\today}
        

        \keywords{KK-theory, UCT}
        \subjclass[2000]{Primary: 46L35}

        \begin{abstract}
A universal coefficient theorem in the setting of Kirchberg's
ideal-related $\kk$-theory was obtained in the fundamental case of a
\cstar-algebra with one
specified ideal by Bonkat in
\cite{ab:bkkpsc} and proved there to split, unnaturally, under certain
conditions. Employing certain $K$-theoretical information derivable
from the given operator algebras in a way introduced here, we shall
demonstrate that Bonkat's UCT does not split in general. Related
methods lead to information on the complexity  of the $K$-theory which
must be used to
classify $*$-isomorphisms for purely infinite $\cstar$-algebras with
one non-trivial ideal.
        \end{abstract}
        \maketitle

%

\section{Introduction}\label{sec-intro}

The $\kk$-theory introduced by Kasparov 
(\cite{ggk:okec}) is one of the most important
tools in the theory of classification of \cstar-algebras, of use especially for simple \cstar-algebras. 
Recently, Kirchberg has developed the socalled ideal-related
$\kk$-theory --- a generalisation of Kasparov's $\kk$-theory
which takes into account the ideal structure of the algebras
considered ---
and obtained strong isomorphism theorems for stable, nuclear,
separable, strongly purely infinite \cstar-algebras
(\cite{ek:nkmkna}). 
The results obtained by Kirchberg establish 
ideal-related $\kk$-theory as an  essential
tool in the classification theory of non-simple \cstar-algebras. 

$\kk$-theory is a bivariant functor;
to obtain a real classification result
one needs a univariant classification functor instead. 
For ordinary $\kk$-theory this is obtained (within the bootstrap category)
by invoking the Universal Coefficient Theorem (UCT) of Rosenberg and
Schochet:

\begin{theorem}[{Rosenberg-Schochet's UCT, \cite{jrcs:ktuctkgk}}] \label{thm:rsUCT}
  Let $A$ and $B$ be separable \cstar-algebras in the bootstrap category $\mc{N}$. 
  Then there is a short exact sequence
  \begin{equation*}
    \mrm{Ext}_{ \Z }^{1} ( K_*(A),K_*(SB) ) \hookrightarrow \kk ( A,B ) \overset{\gamma}{\twoheadrightarrow} \mrm{Hom}_{ \Z } ( K_*(A),K_*(B) )
  \end{equation*}
  (here $K_*(-)$
  denotes the graded group $K_0(-)\oplus K_1(-)$).
  The sequence is natural in both $A$ and $B$, and splits  (unnaturally, in general). 
  Moreover, an element $x$ in $\kk(A,B)$ is
  invertible if and only if $\gamma(x)$ is an isomorphism.
\end{theorem}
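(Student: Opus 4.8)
The plan is to derive this topological statement from its purely algebraic analogue --- the universal coefficient sequence attached to a free resolution --- by realising such a resolution \emph{geometrically} at the level of \cstar-algebras. First I would dispose of the case in which $K_*(A)$ is free abelian in each degree. Such an $A$ is $\kk$-equivalent to a $c_0$-sum of copies of $\C$ and $C_0(\R)$, and since $\kk(\C,B)\cong K_0(B)$ and $\kk(C_0(\R),B)\cong K_1(B)$, one reads off directly that $\kk(A,B)\cong\mrm{Hom}_{\Z}(K_*(A),K_*(B))$, while $\mrm{Ext}^1_{\Z}(K_*(A),-)$ vanishes because free groups are projective. Thus $\gamma$ is an isomorphism and the sequence holds trivially and naturally in this case.

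For a general $A\in\mc{N}$, I would choose a free resolution of the graded group $K_*(A)$,
\[
0 \to F_1 \xrightarrow{\ \partial\ } F_0 \to K_*(A)\to 0,
\]
and realise $F_0,F_1$ as the $K$-theory of algebras $P_0,P_1$ of the above type. By the free case, $\partial$ is induced by a class $\varphi\in\kk(P_1,P_0)$; forming the mapping cone of a representative of $\varphi$ produces a \cstar-algebra whose $K$-theory, computed from the six-term sequence of the cone, reproduces $K_*(A)$, so that (after the degree shift supplied by the cone) this algebra is $\kk$-equivalent to $A$. This exhibits $A$, up to $\kk$-equivalence, inside an extension whose outer terms have free $K$-theory.

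Applying the bifunctor $\kk(-,B)$ to this geometric resolution yields a six-term exact sequence relating $\kk(A,B)$ to $\kk(P_0,B)\cong\mrm{Hom}_{\Z}(F_0,K_*(B))$ and $\kk(P_1,B)\cong\mrm{Hom}_{\Z}(F_1,K_*(B))$. The kernel of $\mrm{Hom}_{\Z}(\partial,K_*(B))$ is $\mrm{Hom}_{\Z}(K_*(A),K_*(B))$, and its cokernel computes $\mrm{Ext}^1_{\Z}(K_*(A),K_*(SB))$ from the chosen resolution --- the suspension on $B$ appearing precisely through the degree shift in the connecting map. Threading these identifications through the six-term sequence extracts the asserted short exact sequence, with $\gamma$ the evaluation map. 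Naturality in $B$ is immediate from functoriality of $\kk(-,B)$; naturality in $A$ follows by comparing resolutions of two algebras along a given morphism, using that any two geometric resolutions are joined by a $\kk$-equivalence over the identity on $K$-theory. The unnatural splitting is then transported from the algebraic splitting of the $\mrm{Ext}$--$\mrm{Hom}$ sequence over $\Z$.

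Finally, the invertibility criterion follows from naturality and the five lemma: a class $x\in\kk(A,B)$ induces, by Kasparov product, a morphism of UCT sequences in any fixed variable, and when $\gamma(x)$ is an isomorphism both the $\mrm{Hom}$ and $\mrm{Ext}^1$ ends of this morphism are isomorphisms, so the five lemma forces $\kk(D,A)\to\kk(D,B)$ and $\kk(B,D)\to\kk(A,D)$ to be isomorphisms for every $D$; a Yoneda-type argument then produces the two-sided Kasparov inverse of $x$. I expect the principal obstacle to be the geometric realisation step --- building the mapping-cone algebra whose $K$-theory matches $A$ and verifying the resulting $\kk$-equivalence --- together with naturality in $A$, since the resolution is not canonical and one must check that the identifications are independent of the choices made.
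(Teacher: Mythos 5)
The paper does not prove this statement: Theorem~\ref{thm:rsUCT} is quoted purely as background, with the proof residing in \cite{jrcs:ktuctkgk}, so there is no internal argument to compare yours against. Judged on its own terms, your sketch reproduces the standard geometric-resolution proof of the UCT: settle the case of free $K$-theory, realise a free resolution of $K_*(A)$ by \cstar-algebras built from $\C$ and $C_0(\R)$, form a mapping cone to obtain a semisplit extension with free outer $K$-theory, apply $\kk(-,B)$, and read off the $\mrm{Hom}$ and $\mrm{Ext}$ terms; the invertibility clause then follows from the five lemma and a Yoneda-type argument exactly as you say. That is the accepted architecture.

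The step that is circular as written is the very first one. To know that an arbitrary $A\in\mc{N}$ with $K_*(A)$ free is $\kk$-equivalent to a $c_0$-sum of copies of $\C$ and $C_0(\R)$, you must lift the $K$-theoretic isomorphism to an \emph{invertible} $\kk$-class; producing a class inducing the isomorphism is easy, since $\kk(\bigoplus_i D_i,A)\cong\prod_i\kk(D_i,A)$ reduces to Bott periodicity, but certifying that it is invertible is precisely the ``Moreover'' clause of the theorem being proved. The same issue resurfaces when you assert that the mapping cone is $\kk$-equivalent to $A$ up to suspension. The standard repair is not to identify these algebras up to $\kk$-equivalence at the outset, but to run a closure argument: show that the class of $A$ for which the asserted sequence is exact for every $B$ contains $\C$ and is stable under the operations generating $\mc{N}$ (suspension, countable inductive limits, the two-out-of-three property for semisplit extensions, and $\kk$-equivalence), the extension step being exactly your six-term-sequence computation applied to the cone extension $SA\otimes\K\hookrightarrow C_\phi\twoheadrightarrow P_0$. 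With the argument reorganised in that order, the remaining identifications you list --- the degree shift producing $K_*(SB)$, the evaluation map $\gamma$, and the unnatural splitting transported from the algebraic $\mrm{Ext}$--$\mrm{Hom}$ sequence --- are all correct, and the obstacles you flag (representing a $\kk$-class by a genuine $*$-homomorphism before taking cones, and naturality in $A$ despite the non-canonical resolution) are indeed where the technical work lies.
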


This UCT allows us to turn isomorphism results (such as Kirchberg-Phillips'
theorem \cite{ekncp:eeccao})
into strong classification theorems. 
Moreover, using the splitting, it allows us to determine completely
the additive structure of the \kk-groups. 

To transform Kirchberg's general result into a strong classification
theorem, one would need a UCT for ideal-related $\kk$-theory. 
This was achieved by Bonkat (\cite{ab:bkkpsc}) in the special case
where the specified ideal structure is just a single ideal. Progress
into more general cases  with finitely many ideals has recently been announced by Mayer-Nest and by the second named
author, but in this paper we will
only consider the case with one specified ideal:

\begin{theorem}%
  [{Bonkat's UCT, \cite[Satz 7.5.3, Satz 7.7.1, and Proposition 7.7.2]{ab:bkkpsc}}]\label{thm:bonkatuct}
  Let $e_{1}$ and $e_{2}$ be extensions of separable, nuclear
  \cstar-algebras in the bootstrap category $\mc{N}$. 
  Then there is a short exact sequence
  \begin{equation*}
    \mrm{Ext}_{ \mc{Z}_{6} }^{1} ( \ksix ( e_{1} ) , \ksix ( S e_{2} ) )
    \hookrightarrow \kkE ( e_{1} , e_{2} ) \overset{\Gamma}{\twoheadrightarrow} \mrm{Hom}_{ \mc{Z}_{6} } ( \ksix ( e_{1} ) , \ksix ( e_{2} ) )
  \end{equation*}
  (here $\ksix(-)$ is the standard cyclic six term exact sequence,
  $\mc{Z}_6$ is the category of cyclic six term chain complexes, and $Se$
  denotes the extension obtained by tensoring all the \cstar-algebras 
  in the extension $e$ with $C_{0} ( 0 , 1 )$). 
  The sequence is natural in both $e_1$ and $e_2$. 
  Moreover, an element $x\in\kkE(e_1,e_2)$ is invertible if and only if
  $\Gamma(x)$ is an isomorphism.
\end{theorem}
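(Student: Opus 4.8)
The plan is to follow the homological template of Rosenberg--Schochet (Theorem~\ref{thm:rsUCT}), transporting each step from the category of ($\mathbb{Z}/2$-)graded abelian groups to the abelian category $\mc{Z}_6$ of cyclic six-term chain complexes. First I would pin down the natural transformation $\Gamma$: an element $x \in \kkE(e_1,e_2)$ acts on the associated six-term $K$-theory sequences, and functoriality of $\ksix$ together with the composition product of $\kkE$ makes $x \mapsto \Gamma(x)$ a natural homomorphism with values in $\mrm{Hom}_{\mc{Z}_6}(\ksix(e_1),\ksix(e_2))$. The real content is then to identify $\ker\Gamma$ with the $\mrm{Ext}^1$-term and to prove exactness and splitting.

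The heart of the argument is a \emph{geometric projective resolution} of $e_1$. I would first develop the homological algebra of $\mc{Z}_6$: exhibit enough projective objects and, crucially, show that each object of the form $\ksix(e_1)$ has projective dimension at most one, so that $\mrm{Ext}^n_{\mc{Z}_6}(\ksix(e_1),-)$ vanishes for $n \geq 2$. I would then realise such a length-one resolution geometrically, constructing extensions $p_0,p_1$ of separable nuclear \cstar-algebras in $\mc{N}$ whose $\ksix$ are projective in $\mc{Z}_6$, a morphism $p_0 \to e_1$ inducing a surjection $\ksix(p_0) \twoheadrightarrow \ksix(e_1)$, and a mapping-cone-type extension $p_1$ so that
\begin{equation*}
  0 \longrightarrow \ksix(p_1) \longrightarrow \ksix(p_0) \longrightarrow \ksix(e_1) \longrightarrow 0
\end{equation*}
is the desired resolution, lifted to an exact sequence at the level of extensions in ideal-related $\kk$-theory. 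The decisive computation, parallel to the classical case, is that for a projective object the $\mrm{Ext}$-obstruction disappears, that is,
\begin{equation*}
  \kkE(p,e_2) \cong \mrm{Hom}_{\mc{Z}_6}(\ksix(p),\ksix(e_2))
\end{equation*}
for $p \in \{ p_0, p_1 \}$.

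With the resolution in hand, I would apply the contravariant functor $\kkE(-,e_2)$ and invoke the six-term exact sequence in ideal-related $\kk$-theory attached to the exact sequence above. The degree shift carried by the boundary maps accounts for the suspension $Se_2$, and feeding in the vanishing of the $\mrm{Ext}$-obstruction for $p_0$ and $p_1$ collapses the long exact sequence to the short exact sequence of the statement. Naturality in $e_1$ and $e_2$ is inherited from the naturality of the $\kk$-theoretic six-term sequence, while the splitting comes from an (unnatural) choice of section of $\ksix(p_0) \twoheadrightarrow \ksix(e_1)$ in $\mc{Z}_6$. For the invertibility criterion, I would use surjectivity of $\Gamma$ to lift $\Gamma(x)^{-1}$; applying $\Gamma$ to the two relevant compositions yields identities, so the error terms lie in $\ker\Gamma = \mrm{Ext}^1$, and since the product of two kernel elements factors through an $\mrm{Ext}^2_{\mc{Z}_6}$-group, which vanishes by the projective-dimension-one property, each error term squares to zero; hence $1 + (\text{error})$ is invertible and $x$ is invertible in $\kkE(e_1,e_2)$.

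The main obstacle is precisely this homological algebra of $\mc{Z}_6$ and its geometric realisation. Unlike the category of abelian groups, $\mc{Z}_6$ consists of chain complexes over a cyclic diagram, and one must establish with care both that the objects coming from extensions have projective dimension exactly one --- so that only $\mrm{Ext}^1$ survives and the invertibility argument goes through --- and that abstract projectives and the abstract resolution can be realised by genuine \cstar-extensions in the bootstrap class with prescribed $K$-theory. Matching the algebraic differentials of a resolution in $\mc{Z}_6$ with the index and exponential maps of an actual \cstar-extension, while keeping the mapping-cone construction inside the class of admissible extensions, is where the bulk of the technical work will lie.
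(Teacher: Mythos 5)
This theorem is not proved in the paper at all: it is quoted verbatim from Bonkat's thesis (Satz 7.5.3, 7.7.1 and Proposition 7.7.2 of \cite{ab:bkkpsc}), so there is no in-paper argument to compare against. That said, your overall architecture --- a geometric projective resolution of length one in $\mc{Z}_6$, the vanishing of the $\mrm{Ext}$-obstruction for the projective building blocks, collapsing the resulting long exact sequence, and the nilpotency argument via $\mrm{Ext}^2_{\mc{Z}_6}=0$ for the invertibility criterion --- is indeed how Bonkat proceeds, and your identification of the hard points (that it is the \emph{exactness} of the cyclic six-term complexes which forces projective dimension $\leq 1$, and that abstract projectives must be realised by genuine \cstar-extensions) is accurate.

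There is, however, one genuine and important error: you claim the sequence splits, ``from an (unnatural) choice of section of $\ksix(p_0)\twoheadrightarrow\ksix(e_1)$ in $\mc{Z}_6$.'' First, note that the statement as given here deliberately does \emph{not} assert splitting (compare with Theorem~\ref{thm:rsUCT}, which does); the main result of this paper, Proposition~\ref{prop:nonsplitting}(\ref{prop:nonsplitting-item1}) via Example~\ref{ex:nonsplitting}, is precisely that Bonkat's sequence fails to split in general. Concretely, for $n$ prime one computes $\kkE(\mfk{e}_{n,0},S\mfk{e}_{n,1})\cong\Z_{n^2}$ while both $\mrm{Hom}_{\mc{Z}_6}(\ksix(\mfk{e}_{n,0}),\ksix(S\mfk{e}_{n,1}))$ and $\mrm{Ext}^1_{\mc{Z}_6}(\ksix(\mfk{e}_{n,0}),\ksix(\mfk{e}_{n,1}))$ are $\Z_n$, so the middle term is not isomorphic to the direct sum of the outer terms and no splitting can exist. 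Second, your proposed mechanism is already flawed on its own terms: a section of $\ksix(p_0)\twoheadrightarrow\ksix(e_1)$ in $\mc{Z}_6$ exists if and only if $\ksix(e_1)$ is a direct summand of a projective, i.e.\ is itself projective --- in which case $\mrm{Ext}^1$ vanishes and there is nothing to split. (The same objection applies to this reading of the classical splitting; the Rosenberg--Schochet splitting rests on a decomposition of graded abelian groups into elementary pieces that has no analogue for exact cyclic six-term complexes, which is exactly where the present paper locates the obstruction.) Everything else in your outline is a reasonable reconstruction of Bonkat's proof, but the splitting claim must be deleted.
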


Bonkat leaves open the question of whether this UCT splits in
general. We prove here that this is not always the case, even in the
fundamental case considered by Bonkat (see
Proposition~\ref{prop:nonsplitting}(\ref{prop:nonsplitting-item1})
below).

This observation tells us --- in contrast to the ordinary $\kk$-theory
--- that we cannot, in general, completely determine the additive
structure of $\kkE$ just by using the UCT.  It is comforting to note,
as may be inferred from the results in \cite{mr:ceccstesk},
\cite{segr:rccconi} and \cite{grer:rccconiII}, that this has
only marginal impact on the usefulness of Bonkat's result in the
context of classification of e.g.\ the \cstar-algebras considered by
Kirchberg. But as we shall see it has several repercussions concerning
the classification of homomorphisms and automorphisms of such
$\cstar$-algebras, and opens an intriguing discussion --- which it is our ambition
to close elsewhere (\cite{segrer:ancka}) in the important special
case of Cuntz-Krieger algebras satisfying condition (II) --- on the
nature of an invariant classifying such morphisms. 

Indeed, examples abound in classification theory in which the
invariant needed to classify automorphims up to approximate unitary
equivalence on a certain class of
\cstar-algebras is more complicated than the classifying invariant for
the algebras themselves. For instance, even though $K_*(-)$ is a
classifying invariant for stable Kirchberg algebras (i.e.\  nuclear, separable, simple, purely infinite
  \cstar-algebras) one needs to turn to \emph{total} $K$-theory ---
the collection of $K_*(-)$ and all torsion coefficient groups
$K_*(-;\Z_n)$ --- in
order to obtain exactness of 
  \begin{equation}\label{eq-kirch}
    \{1\}\rightarrow
    \overline{\mrm{Inn}}(A)
    \rightarrow\mrm{Aut}(A) \rightarrow\mrm{Aut}_{\Lambda}(\underline{K}(A))
    \rightarrow\{1\},
  \end{equation}
  where $\overline{\mrm{Inn}}(A)$ is the group of
  automorphisms of $A$ that are approximately unitarily equivalent to
  $\id_A$ and the subscript $\Lambda$ indicates that the group
isomorphism on $\underline{K}(A)$ must commute with all the natural
\emph{Bockstein} operations.

The appearance of total $K$-theory in \eqref{eq-kirch} is explained by
the  Universal Multicoefficient Theorem obtained by 
Dadarlat and Loring in \cite{mdtal:umctkg}:

\begin{theorem}[{Dadarlat-Loring's UMCT, \cite{mdtal:umctkg}}]\label{umct}
  Let $A$ and $B$ be separable \cstar-algebras in the bootstrap
  category $\mc{N}$.
  Then there is a short exact sequence 
  \begin{equation*}
    \mrm{Pext}_{ \Z }^{1} ( K_*(A),K_*(SB) ) \hookrightarrow \kk (A,B) 
    \twoheadrightarrow\mrm{Hom}_\Lambda(\tk(A),\tk(B) )
  \end{equation*}
  (here $\mrm{Pext_\Z^1}$ denotes the subgroup of $\mrm{Ext}_\Z^1$
  consisting of pure extensions, and $\mrm{Hom}_\Lambda$ denotes the group of
  homomorphisms respecting the Bockstein operations). 
  The sequence is natural in both $A$ and $B$, and an element $x$ in $\kk(A,B)$ is
  invertible if and only if the induced element is an isomorphism.
  Moreover, $\mrm{Pext}_\Z^1(K_*(A),K_*(SB))$ is zero whenever the
  $K$-theory of $A$ is finitely generated.
\end{theorem}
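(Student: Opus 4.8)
The plan is to analyse the canonical homomorphism
\[
\Gamma\colon \kk(A,B)\longrightarrow \mrm{Hom}_\Lambda(\tk(A),\tk(B))
\]
sending a Kasparov class to the map it induces on total $K$-theory, and to prove that it is surjective with kernel naturally isomorphic to $\mrm{Pext}_\Z^1(K_*(A),K_*(SB))$. The starting point is to fix the target precisely: one takes $\Lambda$ to be the ring of \emph{all} natural operations among the coefficient functors $K_*(-;\Z_n)$, so that $\tk(-)$ is by construction a $\Lambda$-module and $\mrm{Hom}_\Lambda$ records exactly the maps compatible with every Bockstein operation. The key structural observation is that each building block---$\C$ and its suspension, together with the Cuntz algebras $\mc{O}_{n+1}$ and their suspensions---\emph{represents} one of the coefficient functors, whence $\tk$ of a block is the corresponding cyclic (and, in the appropriate sense, projective) $\Lambda$-module. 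A Yoneda-type computation then identifies $\mrm{Hom}_\Lambda(\tk(P),\tk(B))$ with the relevant coefficient group of $B$ and matches it term by term with $\kk(P,B)$; this is what makes $\mrm{Hom}_\Lambda$ the correct receptacle, and it is the conceptual heart of the argument.

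I would then settle the \emph{finitely generated} case. If $K_*(A)$ is finitely generated I would choose a model algebra $P$ that is a finite direct sum of building blocks with $K_*(P)\cong K_*(A)$; the ordinary UCT (Theorem~\ref{thm:rsUCT}) furnishes a $\kk$-equivalence $A\simeq P$, and by naturality of the Bockstein operations this equivalence is automatically $\Lambda$-compatible, so it suffices to treat $P$. Additivity of $\Gamma$ and the block computation reduce the claim to the individual building blocks, on which $\Gamma$ is an isomorphism by inspection; more precisely, $\Gamma$ is a morphism from the ordinary UCT extension for $\kk(P,B)$ to the description of $\mrm{Hom}_\Lambda(\tk(P),\tk(B))$ as an extension of $\mrm{Hom}_\Z(K_*P,K_*B)$ by $\mrm{Ext}_\Z^1(K_*P,K_*SB)$, and the Five Lemma forces it to be an isomorphism. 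Since $\mrm{Pext}_\Z^1$ of a finitely generated group vanishes, this case already yields the final ``moreover'' assertion.

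For arbitrary $A$ in $\mc{N}$ I would write $A$, up to $\kk$-equivalence, as an inductive limit $\varinjlim A_n$ of \cstar-algebras with finitely generated $K$-theory, and feed this into the Milnor ${\varprojlim}^1$-sequence for the representable functor $\kk(-,B)$,
\[
0\to {\varprojlim}^{1}\,\kk(A_n,SB)\to \kk(A,B)\to \varprojlim \kk(A_n,B)\to 0 .
\]
The finitely generated case applies termwise; using continuity of $\tk$ and the fact that $\mrm{Hom}_\Lambda$ carries the inductive limit $\tk(A)=\varinjlim\tk(A_n)$ to the inverse limit of the groups $\mrm{Hom}_\Lambda(\tk(A_n),\tk(B))$, the quotient term is exactly $\mrm{Hom}_\Lambda(\tk(A),\tk(B))$, which establishes surjectivity of $\Gamma$ and identifies the cokernel. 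The hard part---and the step I expect to be the main obstacle---is to identify the ${\varprojlim}^{1}$-term with $\mrm{Pext}_\Z^1(K_*(A),K_*(SB))$. This rests on the classical identification of ${\varprojlim}^{1}\,\mrm{Hom}_\Z(K_*A_n,-)$ with the pure extensions of the limit group, combined with a verification that the extra $\mrm{Ext}_\Z^1$-data distinguishing $\mrm{Hom}_\Lambda$ from $\mrm{Hom}_\Z$ in the finitely generated case assembles into a tower with vanishing derived limit, and so contributes nothing. Once this is in hand, naturality of the entire sequence is inherited from functoriality of $\kk$ and $\tk$, and the invertibility criterion follows from the usual Five Lemma argument applied to the map of UMCT sequences induced by Kasparov product with a given class.
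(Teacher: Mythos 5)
This statement is not proved in the paper at all: it is Dadarlat--Loring's Universal Multicoefficient Theorem, imported verbatim from \cite{mdtal:umctkg} (with the original splitting assertion deliberately deleted, since, as the surrounding text explains, Dadarlat has pointed out that the claimed splitting is false, the error tracing back to \cite{cls:umskg}). So there is no in-paper proof to compare yours against; the only meaningful comparison is with the published proof in \cite{mdtal:umctkg} and the Schochet fine-structure papers it relies on.

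Measured against that, your outline follows essentially the same route as the original: settle the finitely generated case by replacing $A$ with a direct sum of building blocks representing the coefficient functors and comparing the UCT sequence with the description of $\mrm{Hom}_\Lambda(\tk(P),\tk(B))$ as an extension of $\mrm{Hom}_\Z(K_*P,K_*B)$ by $\mrm{Ext}^1_\Z(K_*P,K_*SB)$; then pass to general $A$ via the Milnor $\varprojlim^1$-sequence and Jensen's identification of $\varprojlim^1\mrm{Hom}_\Z(K_*A_n,-)$ with $\mrm{Pext}$. Two caveats. First, the step you describe as ``by inspection'' on building blocks --- exactness of the bottom row in the middle, i.e.\ that every $\Lambda$-morphism vanishing on $K_*$ arises from an $\mrm{Ext}$-class --- is the genuine technical heart of \cite{mdtal:umctkg} and requires a real computation with the Bockstein operations, not just Yoneda formalism. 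Second, your treatment of the $\varprojlim^1$-term (in particular the claimed vanishing of the derived limit of the $\mrm{Ext}$-towers) is precisely the delicate territory where the literature went wrong: the error corrected in \cite{cls:cumskg} and \cite{cls:fskgII} lives in this part of the argument, and it is what kills the naive splitting. Your sketch is a fair reconstruction of the strategy, but those two steps would need to be carried out in full before it constitutes a proof.
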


Dadarlat has pointed out to us that although \cite{mdtal:umctkg} states that
the UMCT splits  in general, this is not true. The problem
can be traced to one in  \cite{cls:umskg}, cf.\
\cite{cls:cumskg}
and \cite{cls:fskgII}. 

In the stably finite case, as exemplified by stable real rank zero $AD$
algebras, the UMCT leads to exactness of
  \begin{equation}\label{eq-dl}
    \{1\}\rightarrow
    \overline{\mrm{Inn}}(A)
    \rightarrow\mrm{Aut}(A) \rightarrow\mrm{Aut}_{\Lambda,+}(\underline{K}(A))
    \rightarrow\{1\},
  \end{equation}
in which the subscript  ``$+$'' indicates the presence of positivity
conditions (see \cite{mdtal:umctkg} for details). 
Noting the way the usage of a six term exact sequence in
\cite{mr:ceccstesk} parallels the
usage of positivity in the stably finite case (cf.\
\cite{mdse:ccpikc}) it is natural to
speculate (as indeed the first  named author did at The First Abel
Symposium, cf.\ \cite{segr:rccconi}) that by combining all coefficient six term exact sequences
into an invariant $\tksix(-)$ one obtains an exact sequence of the form
\begin{equation}\label{autoeq1}
\xymatrix{
\{ 1 \} \ar[r] & \overline{\mrm{Inn}} ( e ) \ar[r] & \mrm{Aut}(e) \ar[r] & \mrm{Aut}_\Lambda ( \tksix ( e ) ) \ar[r] & \{1\},
}
\end{equation}
and to search for a corresponding UMCT along the lines of
Theorem \ref{umct}.

This sequence is clearly a chain complex, but as we will see, the natural map from $\kkE(e_1,e_2)$ to
$\mrm{Hom}_\Lambda(\tksix(e_1),\tksix(e_2))$ is not injective nor
is it surjective in general for extensions $e_1$ and $e_2$ with
finitely generated $K$-theory (see
Proposition~\ref{prop:nonsplitting}(\ref{prop:nonsplitting-item2}),(\ref{prop:nonsplitting-item3})),
and 
we will  give an example of an extension of stable Kirchberg
algebras in the bootstrap 
category $\mc{N}$ with finitely generated $K$-theory, such that
\eqref{autoeq1} is only exact at $\overline{\mrm{Inn}} ( e )$, telling
us in unmistakable terms that this is the wrong invariant to use. 

Our methods are based on computations related to a
class of extensions which, we believe, should be thought of as a
substitute for the total $K$-theory of relevance in the classification
of, e.g., non-simple, stably finite \cstar-algebras with real rank
zero. We shall undertake a more systematic study of these objects
elsewhere, and show there how they may be employed to the task of computing 
Kirchberg's groups $\kkE(-,-)$.

\section{Preliminaries}\label{sec-prelim}

We first set up some notation that will be used throughout.

\begin{definition}\label{def:kkthycoeff}
Let $n\geq2$ be an integer and
denote the non-unital dimension drop algebra by  $\ourdd{n}{0} = \setof{ f \in C_{0} ( (0,1] , \msf{M}_{n} )  }{
f(1) \in \C 1_{ \msf{M}_{n} } }$.  Then $\ourdd{n}{0}$ fits into the
short exact sequence 
\begin{equation*}
\mfk{e}_{n,0} :  S\msf{M}_{n} \hookrightarrow \ourdd{n}{0} \twoheadrightarrow \C.
\end{equation*}
It is well known that $K_{0} ( \ourdd{n}{0} ) = 0$ and $K_{1} ( \ourdd{n}{0} ) = \Z_{n}$, where $\Z_{n}$ denotes the cyclic abelian group with $n$ elements.  

Let $\mfk{e}_{n , 1} :  S \mbb{C} \hookrightarrow \ourdd{n}{1} \twoheadrightarrow \ourdd{n}{0}$ be the extension obtained from the mapping cone of the map $\ourdd{n}{0} \twoheadrightarrow \C$.  The diagram
\begin{equation}
\vcenter{
\xymatrix{
 0 \ar[r] \ar[d] & S\C \ar@{^{(}->}[d] \ar@{=}[r] & S\C \ar@{^{(}->}[d] \\
 S\msf{M}_{n} \ar@{=}[d] \ar@{^{(}->}[r] & \ourdd{n}{1} \ar@{>>}[d] \ar@{>>}[r] & C\C  \ar@{>>}[d] \\
 S\msf{M}_{n} \ar@{^{(}->}[r] & \ourdd{n}{0} \ar@{>>}[r] & \C  
}
}
\end{equation}  
is commutative and the columns and rows are short exact sequences.  Note that the $*$-homomorphism from $S \msf{M}_{n}$ to $\ourdd{n}{1}$ induces a $\kk$-equivalence.  

Let $\mfk{e}_{n , 2} :  S \ourdd{n}{0} \hookrightarrow \ourdd{n}{2}
\twoheadrightarrow \ourdd{n}{1}$ be the extension obtained from the
mapping cone of the canonical map $\ourdd{n}{1} \twoheadrightarrow \ourdd{n}{0}$.  Then the diagram
\begin{equation}
\vcenter{
\xymatrix{
 0 \ar[r] \ar[d] & S\ourdd{n}{0} \ar@{^{(}->}[d] \ar@{=}[r] & S\ourdd{n}{0} \ar@{^{(}->}[d] \\
 S\C \ar@{=}[d] \ar@{^{(}->}[r] & \ourdd{n}{2} \ar@{>>}[d] \ar@{>>}[r] & C\ourdd{n}{0}  \ar@{>>}[d] \\
 S\C \ar@{^{(}->}[r] & \ourdd{n}{1} \ar@{>>}[r] & \ourdd{n}{0}  
}
}
\end{equation}  
is commutative and the columns and rows are short exact sequences.  Note
that the $*$-homomorphism from $S\C$ to $\ourdd{n}{2}$ induces a
$\kk$-equivalence. This implies, with a little more work, that we get no new
$K$-theoretical information from considering objects
$\ourdd{n}{k}$ or  $\mfk{e}_{n , k}$ for $k>2$. Note also that the
\cstar-algebras 
$\ourdd{n}{0},\ourdd{n}{1}$ and $\ourdd{n}{2}$ are NCCW complexes of
dimension 1, 1, and 2, respectively, in the sense of
\cite{elp:sar}. See Figure \ref{nccws}.

\begin{figure}
\begin{center}
\begin{tabular}{ccc}
$\ourdd{n}0$&$\ourdd{n}1$&$\ourdd{n}2$\\
\qquad\includegraphics[scale=0.39,angle=270]{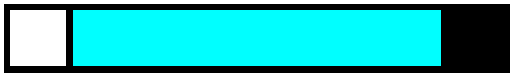}\qquad&\qquad\includegraphics[scale=0.39,angle=270]{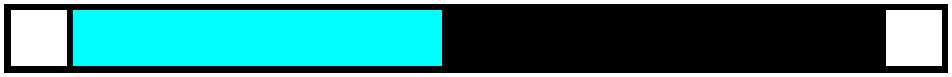}\qquad&\qquad\includegraphics[scale=0.39,angle=270]{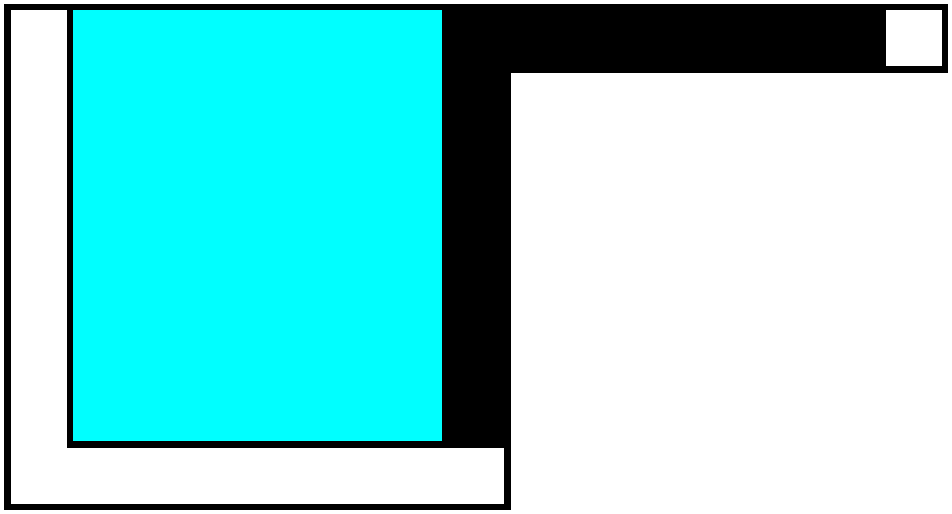}\qquad\end{tabular}
\end{center}
\bigskip
\mbox{[Fibre legends:
\raisebox{-0.05cm}{\includegraphics[width=0.4cm]{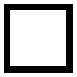}}=$0$,
\raisebox{-0.05cm}{\includegraphics[width=0.4cm]{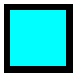}}=$\msf{M}_{n}$,
\raisebox{-0.05cm}{\includegraphics[width=0.4cm]{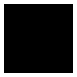}}=$\mathbb C$]}
\caption{NCCW structure of $\ourdd ni$ }
\label{nccws}
\end{figure}

Let $e :  A_{0} \hookrightarrow A_{1} \twoheadrightarrow A_{2}$ be an
extension of \cstar-algebras.  We have an ``ideal-related $K$-theory
with $\Z_{n}$-coefficients'' denoted by $\ksix ( e ; \Z_{n} )$.  More
precisely, $\ksix ( e ; \Z_{n} )$ denotes the six term exact sequence 
\begin{equation*}
\xymatrix{
K_{0} ( A_{0} ; \Z_{n} ) \ar[r] & K_{0} ( A_{1} ; \Z_{n} ) \ar[r] & K_{0} ( A_{2} ; \Z_{n} ) \ar[d] \\
K_{1} ( A_{2} ; \Z_{n} ) \ar[u] & K_{1} ( A_{1} ; \Z_{n} ) \ar[l] & K_{1} ( A_{0} ; \Z_{n} ) \ar[l] 
}
\end{equation*}  
obtained by applying the covariant functor $\kk^{*} ( \ourdd{n}{0} , -
)$ to the extension $e$. 

Let us denote the standard six term exact sequence in $K$-theory by
$\ksix(e)$. The collection consisting of $\ksix(e)$ and
$\ksix ( e ; \Z_{n} )$ for all $n\geq 2$ will be denoted by $\tksix ( e
)$.  A homomorphism from $\tksix ( e_{1} )$ to $\tksix ( e_{2} )$
consists of a morphism from $\ksix(e_1)$ to $\ksix(e_2)$ along with an infinite family of morphisms from $\ksix ( e_{1} ; \Z_{n}
)$ to $\ksix ( e_{2} ; \Z_{n} )$ respecting the Bockstein operations
in $\Lambda$.
We will denote the group of homomorphisms from $\tksix ( e_{1} )$ to
$\tksix ( e_{2} )$ by $\mrm{Hom}_{ \Lambda } ( \tksix ( e_{1} ) ,
\tksix ( e_{2} ) )$.   
We turn $\tksix$ into a functor in the obvious way.  

\begin{lemma} There is  a natural homomorphism 
\begin{equation*}
\Gamma_{e_{1}, e_{2}} : \kkE ( e_{1} , e_{2}  ) \longrightarrow \mrm{Hom}_{ \Lambda } ( \tksix ( e_{1} ) , \tksix ( e_{2} ) ).
\end{equation*}
\end{lemma}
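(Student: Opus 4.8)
The plan is to let an element of $\kkE(e_{1},e_{2})$ act on the invariant $\tksix$ by Kasparov multiplication in the algebra variable, and to deduce compatibility with the Bockstein operations from the associativity of that product. Write $e_{i} : A_{0}^{(i)} \hookrightarrow A_{1}^{(i)} \twoheadrightarrow A_{2}^{(i)}$ for $i=1,2$. The starting observation is that every group occurring in $\tksix(e_{i})$ has the form $\kk^{*}(D, A_{j}^{(i)})$, where $j \in \{0,1,2\}$ labels the three \cstar-algebras of $e_{i}$, and where the coefficient algebra is $D = \C$ for the standard sequence $\ksix(e_{i})$ and $D = \ourdd{n}{0}$ for $\ksix(e_{i}; \Z_{n})$; indeed this is precisely how $\ksix(e_{i};\Z_{n})$ was defined above. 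Moreover both the horizontal maps and the connecting maps of each of these six-term sequences are given by right Kasparov multiplication: the horizontal maps by the $\kk$-classes $[\iota_{i}], [q_{i}]$ of the inclusion and quotient of $e_{i}$, and the connecting maps by the extension class $\partial_{e_{i}} \in \kk^{1}(A_{2}^{(i)}, A_{0}^{(i)})$ (which exists since the extensions are semisplit).

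First I would extract from $x \in \kkE(e_{1},e_{2})$ a compatible triple of ordinary $\kk$-classes $\pi_{j}(x) \in \kk(A_{j}^{(1)}, A_{j}^{(2)})$, $j=0,1,2$, using the way $\kkE$ is assembled from the $\kk$-theory of the constituent algebras. The relevant compatibility is that each $\pi_{j}$ is additive and multiplicative for the composition product of $\kkE$, that the component classes intertwine the inclusions and quotients, i.e.
\[
[\iota_{1}] \cdot \pi_{1}(x) = \pi_{0}(x) \cdot [\iota_{2}], \qquad [q_{1}] \cdot \pi_{2}(x) = \pi_{1}(x) \cdot [q_{2}],
\]
and that they intertwine the extension classes, $\partial_{e_{1}} \cdot \pi_{0}(x) = \pi_{2}(x) \cdot \partial_{e_{2}}$; this last relation is exactly the naturality of the six-term connecting maps under $\kkE$-morphisms.

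I would then define $\Gamma_{e_{1},e_{2}}(x)$ to act on each group $\kk^{*}(D, A_{j}^{(1)})$ of $\tksix(e_{1})$ by right multiplication with $\pi_{j}(x)$. Associativity of the Kasparov product, combined with the three intertwining relations above, shows at once that this family of maps commutes with all horizontal and connecting maps, and is therefore a morphism of six-term sequences for each coefficient algebra $D$. The key point, that $\Gamma_{e_{1},e_{2}}(x)$ respects the operations in $\Lambda$, is now immediate: each Bockstein operation is implemented by left Kasparov multiplication with a fixed class in $\kk^{*}(D', D)$ between coefficient algebras, while $\Gamma_{e_{1},e_{2}}(x)$ multiplies by $\pi_{j}(x)$ on the right in the algebra variable, so the two commute by associativity. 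Hence $\Gamma_{e_{1},e_{2}}(x) \in \mrm{Hom}_{\Lambda}(\tksix(e_{1}), \tksix(e_{2}))$. Additivity of $\Gamma_{e_{1},e_{2}}$ follows from bilinearity of the product, and naturality in both variables from associativity, which yields $\Gamma_{e_{1},e_{3}}(x \cdot y) = \Gamma_{e_{2},e_{3}}(y) \circ \Gamma_{e_{1},e_{2}}(x)$.

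The main obstacle I anticipate is the first step: producing the compatible components $\pi_{j}(x)$ from $x \in \kkE(e_{1},e_{2})$ and, above all, verifying the intertwining relation with the extension classes $\partial_{e_{i}}$. This is the genuinely ideal-related ingredient and must be read off from Bonkat's construction of $\kkE$; once it is in place, the remaining verifications reduce to formal manipulations with the associativity and bilinearity of the Kasparov product.
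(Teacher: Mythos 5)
Your construction is sound in outline but follows a genuinely different route from the paper. The paper does not build $\Gamma_{e_{1},e_{2}}$ by hand at all: it observes that $\ksix(-;\Z_{n})$, and hence $\tksix(-)$ and $\mrm{Hom}_{\Lambda}(\tksix(e_{1}),\tksix(-))$, are stable, homotopy invariant, split exact functors, and then invokes Bonkat's analogue of Higson's characterization of $\kk$ (Satz 3.5.9 of \cite{ab:bkkpsc}) to obtain the natural transformation out of $\kkE(e_{1},-)$ sending $[\id_{e_{1}}]$ to $\tksix(\id_{e_{1}})$; additivity is then obtained by the argument of Lemma 3.2 of \cite{nh:ck}. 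That universal-property argument buys exactly what you identify as your main obstacle: one never has to verify that the components $\pi_{j}(x)$ of a general class $x\in\kkE(e_{1},e_{2})$ intertwine the extension classes $\partial_{e_{i}}$, because compatibility with the connecting maps is absorbed into the check that the target functor is split exact. Your approach would in exchange yield an explicit formula for $\Gamma_{e_{1},e_{2}}$ in terms of Kasparov products, which is genuinely useful and close in spirit to how the groups $K_{\mc{E}}(e;\Z_{n})$ are exploited later in the paper; but as written the crucial identity $\partial_{e_{1}}\cdot\pi_{0}(x)=\pi_{2}(x)\cdot\partial_{e_{2}}$ is asserted rather than proved, and note that you need it as an identity of $\kk^{1}$-classes, not merely of the induced maps on $K$-groups, in order to deduce commutation simultaneously for every coefficient algebra $D$. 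To complete your route you must extract that identity from Bonkat's construction of $\kkE$; otherwise the universal-property argument is the shorter path.
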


\begin{proof}
A computation shows that $\ksix ( - ; \Z_{n} )$ is a stable, homotopy invariant, split exact functor since $\kk$ satisfies these properties.  Therefore, $\tksix ( - )$ is a stable, homotopy invariant, split exact functor.  Hence, for every fixed extension $e_{1}$ of \cstar-algebras, $\mrm{Hom}_{ \Lambda } ( \tksix ( e_{1} ) , \tksix ( - ) )$ is a stable, homotopy invariant, split exact functor.  By Satz 3.5.9  of \cite{ab:bkkpsc}, we have a natural transformation $\Gamma_{e_{1} , -}$ from $\kkE( e_{1} , - )$ to $\mrm{Hom}_{ \Lambda } ( \tksix ( e_{1} ) , \tksix ( - ) )$ such that $\Gamma_{e_{1} , - }$ sends $[ \mrm{id}_{e_{1}} ]$ to $\tksix ( \id_{e_{1}} )$.  Arguing as in the proof of Lemma 3.2 of \cite{nh:ck}, we have that 
\begin{equation*}
\Gamma_{e_{1} , e_{2} } : \kkE ( e_{1} , e_{2} ) \longrightarrow \mrm{Hom}_{ \Lambda } ( \tksix ( e_{1} ) , \tksix ( e_{2} ) )
\end{equation*} 
is a group homomorphism.
\end{proof}

Another collection of groups that we will use in this  paper is the
following:  for each $n\geq 2$, set 
\begin{equation*}
K_{ \mc{E} } ( e ; \Z_{n} ) = \bigoplus_{ i = 0}^{ 2} \left( \kkE^{*} ( \mfk{e}_{ n , i } , e ) \oplus \kk^{*} ( \ourdd{n}{0} , A_{i} ) \oplus \kk^{*} ( \C , A_{i} ) \right).
\end{equation*}

\section{Examples}\label{sec-examples}

Accompanied with the groups $\kkE^{*} ( \mfk{e}_{ n , i} , e )$ are naturally defined diagrams, which will be systematically described in a forthcoming paper.  For now, we will use these groups to show the following:
\end{definition}

\begin{proposition}\label{prop:nonsplitting}
\begin{enumerate}[(1)]
\item\label{prop:nonsplitting-item1}
  The UCT of Bonkat (Theorem~\ref{thm:bonkatuct}) 
  does not split in general.
\item\label{prop:nonsplitting-item2}
  There exist $e_{1}$ and $e_{2}$ extensions of separable,
  nuclear \cstar-algebras in the bootstrap category $\mc{N}$ of
  Rosenberg and Schochet \cite{jrcs:ktuctkgk} such that the six term
  exact sequence of $K$-groups associated to $e_{1}$ is finitely
  generated and  
  \begin{equation*}
    \Gamma_{e_{1} , e_{2} } : \kkE( e_{1} , e_{2} ) 
    \longrightarrow \mrm{Hom}_{ \Lambda } ( \tksix ( e_{1} ) , \tksix ( e_{2} ) )
  \end{equation*}
  is not injective.
\item\label{prop:nonsplitting-item3}
  There exist $e_{1}$ and $e_{2}$ extensions of separable,
  nuclear \cstar-algebras in the bootstrap category $\mc{N}$ of
  Rosenberg and Schochet \cite{jrcs:ktuctkgk} such that the six term exact sequence of $K$-groups associated to $e_{1}$ is finitely generated and
  \begin{equation*}
    \Gamma_{e_{1} , e_{2}} : \kkE( e_{1} , e_{2} ) \longrightarrow \mrm{Hom}_{ \Lambda } ( \tksix ( e_{1} ) , \tksix ( e_{2} ) )
  \end{equation*}
  is not surjective.
\end{enumerate}
\end{proposition}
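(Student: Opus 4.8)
The plan is to reduce all three statements to explicit computations with a single well-chosen pair of test extensions, exploiting the fact that the homological algebra in the category $\mc{Z}_{6}$ of cyclic six term complexes is strictly richer than that of abelian groups. Throughout I would use the commuting triangle relating the two comparison maps: there is a natural forgetful homomorphism $\rho \colon \mrm{Hom}_{\Lambda}(\tksix(e_{1}),\tksix(e_{2})) \to \mrm{Hom}_{\mc{Z}_{6}}(\ksix(e_{1}),\ksix(e_{2}))$ that remembers only the $\ksix$-component of a $\Lambda$-morphism, and by construction $\rho \circ \Gamma_{e_{1},e_{2}}$ equals the surjection $\Gamma$ of Bonkat's UCT (Theorem~\ref{thm:bonkatuct}). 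Consequently $\ker \Gamma_{e_{1},e_{2}} \subseteq \ker \Gamma = \mrm{Ext}^{1}_{\mc{Z}_{6}}(\ksix(e_{1}),\ksix(Se_{2}))$, and the restriction of $\Gamma_{e_{1},e_{2}}$ to this $\mrm{Ext}$-term lands in the subgroup of $\mrm{Hom}_{\Lambda}$ consisting of morphisms of $\tksix$ that vanish on $\ksix$ (the ``phantom'' part, detected only by the $\Z_{n}$-coefficient sequences and the Bockstein operations). Parts~(\ref{prop:nonsplitting-item2}) and~(\ref{prop:nonsplitting-item3}) then become, respectively, the assertions that this restricted map has nontrivial kernel and nontrivial cokernel.

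For the test extensions I would take $e_{1}$ and $e_{2}$ in $\mc{N}$ whose six term sequences carry $\Z_{n}$-torsion arranged as in the dimension-drop extensions $\mfk{e}_{n,i}$ of Definition~\ref{def:kkthycoeff}; replacing the relevant building blocks by $\mc{O}_{\infty}$-absorbing algebras with the same $K$-theory keeps us inside the bootstrap category with finitely generated $K$-groups while making $e_{1}$ an extension of Kirchberg algebras, as needed for the later application. The point of this choice is that $\mrm{Hom}_{\Lambda}(\tksix(e_{1}),\tksix(e_{2}))$ can be computed from the coefficient groups $K_{*}(-;\Z_{n})$ using the auxiliary invariant $K_{\mc{E}}(e;\Z_{n})$ and the groups $\kkE^{*}(\mfk{e}_{n,i},e)$ introduced above, whereas $\mrm{Hom}_{\mc{Z}_{6}}$ and $\mrm{Ext}^{1}_{\mc{Z}_{6}}$ are computed purely homologically in $\mc{Z}_{6}$.

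For part~(\ref{prop:nonsplitting-item2}) I would exhibit a nonzero element of $\mrm{Ext}^{1}_{\mc{Z}_{6}}(\ksix(e_{1}),\ksix(Se_{2}))$ whose associated extension of six term complexes is pure, so that it induces the zero morphism on every $\Z_{n}$-coefficient sequence; such an element lies in $\ker \Gamma_{e_{1},e_{2}}$ yet is nonzero, giving non-injectivity. In contrast to the abelian-group situation of the UMCT, where $\mrm{Pext}^{1}_{\Z}$ vanishes for finitely generated $K$-theory, pure self-extensions in $\mc{Z}_{6}$ can persist even when all groups are finite, which is exactly the slack we want. For part~(\ref{prop:nonsplitting-item3}) I would instead produce a $\Lambda$-morphism $\tksix(e_{1}) \to \tksix(e_{2})$ that restricts to $0$ on $\ksix$ but is not in the image of the $\mrm{Ext}$-term under $\Gamma_{e_{1},e_{2}}$; concretely one builds a Bockstein-compatible family of maps on the $\Z_{n}$-coefficient sequences that is not induced by any extension of the underlying six term complexes, the obstruction again coming from the extra morphisms available in $\mc{Z}_{6}$. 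For part~(\ref{prop:nonsplitting-item1}) I would compute $\kkE(e_{1},e_{2})$ as an abelian group directly from the auxiliary six term diagrams attached to the $\kkE^{*}(\mfk{e}_{n,i},e)$, arranging that both $\mrm{Hom}_{\mc{Z}_{6}}$ and $\mrm{Ext}^{1}_{\mc{Z}_{6}}$ contribute $\Z_{n}$-torsion (torsion in the $\mrm{Hom}$-term is essential, since an extension with free quotient always splits) and that the resulting group is cyclic of order $n^{2}$ rather than $\Z_{n}\oplus\Z_{n}$; this contradicts splitting of Bonkat's sequence.

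The main obstacle is the independent computation of $\kkE(e_{1},e_{2})$ and of the ``phantom'' part of $\mrm{Hom}_{\Lambda}$ without circularly invoking the UCT. This rests on establishing the natural long exact (six term) sequences relating $\kkE^{*}(\mfk{e}_{n,i},e)$ for $i=0,1,2$ to the coefficient groups $\kk^{*}(\ourdd{n}{0},A_{i})$ and $\kk^{*}(\C,A_{i})$ --- the ``naturally defined diagrams'' referred to above --- together with a careful bookkeeping of $\mrm{Hom}$ and $\mrm{Ext}^{1}$ in $\mc{Z}_{6}$, where projective resolutions of cyclic six term complexes behave quite differently from those of abelian groups. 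Once these diagrams and the two $\mc{Z}_{6}$-homological computations are in hand, the three comparisons are routine; the novelty and the difficulty lie entirely in extracting the $\Z_{n}$-coefficient information encoded in the $\mfk{e}_{n,i}$ and in identifying the precise non-split extension of $K$-groups that realizes the discrepancy.
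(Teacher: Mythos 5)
Your overall framework---test the maps on the dimension-drop extensions $\mfk{e}_{n,i}$, note that $\ker\Gamma_{e_{1},e_{2}}$ lands inside $\mrm{Ext}^{1}_{\mc{Z}_{6}}(\ksix(e_{1}),\ksix(Se_{2}))$, and for part~(1) arrange $\kkE(e_{1},e_{2})\cong\Z_{n^{2}}$ against $\mrm{Hom}$ and $\mrm{Ext}$ both equal to $\Z_{n}$---coincides with the paper's, which takes $e_{1}=\mfk{e}_{n,0}$ and $e_{2}=S\mfk{e}_{n,1}$ for $n$ prime and obtains $\Z_{n^{2}}$ from two applications of Bonkat's Korollar~7.1.6, with no need for the auxiliary diagrams you invoke. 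But there is a genuine gap in your mechanism for part~(2). You propose to exhibit a \emph{nonzero pure} element of $\mrm{Ext}^{1}_{\mc{Z}_{6}}$ and assert that pure self-extensions in $\mc{Z}_{6}$ can persist even when all groups are finite. They cannot: an object of $\mc{Z}_{6}$ with finitely generated entries is a finitely presented module over the corresponding (Noetherian) quiver algebra, and a pure extension with finitely presented quotient splits. Since the statement you are proving \emph{requires} $\ksix(e_{1})$ to be finitely generated, your route for~(2) is self-defeating. It also misses the actual phenomenon: in the paper's example the nonzero element of $\ker\Gamma_{e_{1},e_{2}}$ is the generator of the genuinely non-split $\mrm{Ext}^{1}_{\mc{Z}_{6}}\cong\Z_{n}$, realized as $n\Z_{n^{2}}\subseteq\Z_{n^{2}}$; the kernel is emphatically \emph{not} a Pext-type group, and that is the whole point of the counterexample. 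What kills this element is an exponent count: by Dadarlat--Loring (applicable precisely because the $K$-theory is finitely generated), $\mrm{Hom}_{\Lambda}(\tksix(\mfk{e}_{n,0}),\tksix(S\mfk{e}_{n,1}))$ embeds into $\prod_{i}\kk(A_{i},B_{i})$, which is annihilated by $n$, whereas $\Z_{n^{2}}$ contains an element of order $n^{2}$.

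For part~(3) your plan (build a Bockstein-compatible family not induced from $\kkE$) is workable in principle but strictly harder than necessary; the paper simply computes $\kkE(\mfk{e}_{n,1},\mfk{e}_{n,0})\cong\Z\oplus\Z_{n}$ and $\mrm{Hom}_{\Lambda}(\tksix(\mfk{e}_{n,1}),\tksix(\mfk{e}_{n,0}))\cong\Z\oplus\Z_{n}\oplus\Z_{n}$ and observes that no surjection from the former onto the latter exists. Finally, in all three parts the entire content lies in the explicit computations of $\kkE$ and $\mrm{Hom}_{\Lambda}$, which your proposal defers to unestablished ``naturally defined diagrams''; as written, the proposal does not yet constitute a proof.
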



The proposition will be proved through a series of examples. The
following example shows that the UCT of Bonkat does not split in general.   
Also it shows that there exist extensions $e_{1}$ and $e_{2}$ of
separable, nuclear \cstar-algebras in $\mc{N}$ with finitely generated
$K$-theory, such that $\Gamma_{ e_{1} , e_{2} }$ is not injective. 

\begin{example}\label{ex:nonsplitting}
Let $n$ be a prime number.  By Korollar 7.1.6 of \cite{ab:bkkpsc}, we have that 
\begin{equation*}
\xymatrix{
\Z \ar[r] & \Z \ar[r] & \kkE^{1} ( \mfk{e}_{n,0} , \mfk{e}_{n,1} ) \ar[r] & 0
}
\end{equation*}
is an exact sequence.  Therefore, $\kkE^{1} ( \mfk{e}_{ n , 0} , \mfk{e}_{n,1} )$ is a cyclic group.  By Korollar 7.1.6 of \cite{ab:bkkpsc}, $\kkE^{1} ( \mfk{e}_{n,0} , \mfk{e}_{n,1} )$ fits into the following exact sequence
\begin{equation*}
\xymatrix{
0 \ar[r]& \Z_{n} \ar[r]& \kkE^{1} ( \mfk{e}_{n,0} , \mfk{e}_{n,1} ) \ar[r]& \Z_{n} \ar[r]& 0.}
\end{equation*}
So, $\kkE^{1} ( \mfk{e}_{ n, 0 }  , \mfk{e}_{n,1} )$ is isomorphic to $\Z_{n^2}$.

An easy computation shows that $\mrm{Hom} ( \ksix ( \mfk{e}_{ n , 0} )
, \ksix ( S \mfk{e}_{n,1} ) )$ is isomorphic to $\Z_{n}$.  Using this
fact and the fact that $\kkE(\mfk{e}_{ n, 0 } , S\mfk{e}_{n,1} )\cong\kkE^{1} ( \mfk{e}_{ n, 0 } , \mfk{e}_{n,1} )$
is $\Z_{n^2}$, we immediately see that the UCT of Bonkat does not split in this case.

We would like to also point out another consequence of this example.
Since $n$ is prime and $\mrm{Ext}_{ \mc{Z}_{6} }^{1} ( \ksix (
\mfk{e}_{n,0} ) , \ksix ( \mfk{e}_{n,1} ) )$ injects into a proper
subgroup of $\kkE^{1} ( \mfk{e}_{n,0} , \mfk{e}_{n,1} )$, we have that
$\mrm{Ext}_{ \mc{Z}_{6} }^{1} ( \ksix ( \mfk{e}_{n,0} ) , \ksix (
\mfk{e}_{n,1} ) )$ is isomorphic to $\Z_{n}$.  Therefore, $n$
annihilates all $K$-theory information but $n$ does not annihilate $\kkE^{1} ( \mfk{e}_{n,0} , \mfk{e}_{n,1} )$. 

We will now show that the natural map $\Gamma_{ \mfk{e}_{ n , 0 } , S \mfk{e}_{n,1} }$
from $\kk_{ \mc{E} } ( \mfk{e}_{ n, 0 } , S \mfk{e}_{n,1} )$ to
$\mrm{Hom}_{ \Lambda } ( \tksix ( \mfk{e}_{n,0} ) , \tksix ( S
\mfk{e}_{n,1} ) )$ is not injective.  
Let $A_0\hookrightarrow A_1\twoheadrightarrow A_2$ and 
$B_0\hookrightarrow B_1\twoheadrightarrow B_2$ denote the extensions
$\mfk{e}_{n,0}$ and $S\mfk{e}_{n,1}$, respectively. 
Note that the corresponding six term exact sequences are (isomorphic to)
$$\vcenter{\xymatrix{
    0\ar[r]&0\ar[r]&\Z\ar[d]\\
    0\ar[u]&\Z_n\ar[l]&\Z\ar[l]
    }}
\quad\text{and}\quad
\vcenter{\xymatrix{\Z\ar[r]&\Z\ar[r]&\Z_n\ar[d]\\
    0\ar[u]&0\ar[l]&0\ar[l]}},$$
respectively. Using the UCT of Rosenberg and Schochet, a short computation shows that 
$n\prod_{i=0}^2\kk(A_i,B_i)=0$. 
Since all the $K$-theory is finitely generated, we have by
Dadarlat and Loring's UMCT that
$\mrm{Hom}_\Lambda(\tksix(\mfk{e}_{n,0}),\tksix(S\mfk{e}_{n,1}))$ is
isomorphic to a subgroup of $\prod_{i=0}^2\kk(A_i,B_i)$. 
Since the latter group has no element of order $n^2$ 
and 
$\kk_{ \mc{E} } ( \mfk{e}_{n,0} , S \mfk{e}_{n,1} )$ is isomorphic to $\Z_{n^{2}}$, we have that $\Gamma_{ \mfk{e}_{n,0} , S \mfk{e}_{n,1} }$ is not injective.
\end{example}


The above example also provides a counterexample to Satz 7.7.6 of \cite{ab:bkkpsc}.  The arguments in the proof of Satz 7.7.6 are correct but it appears that Bonkat overlooked the case were the six term exact sequences are of the form:
\begin{equation*}
\xymatrix{
0 \ar[r] & 0 \ar[r] & \ast \ar[d] \\
0 \ar[u] &  \ast \ar[l] & \ast \ar[l]
}\quad \quad
\xymatrix{
0 \ar[r] & 0 \ar[r] & 0 \ar[d] \\
\ast \ar[u] & \ast \ar[l] & \ast \ar[l]
}
\end{equation*}


Our next example shows that there exist extensions $e_{1}$ and
$e_{2}$ of separable, nuclear \cstar-algebras in $\mc{N}$ with
finitely generated $K$-groups, such that $\Gamma_{ e_{1} , e_{2} }$ is not surjective.

\begin{example}\label{ex:nonsurjtksix}
Let $n$ be a prime number.  Consider the following short exact sequences of extensions:
\begin{equation}\label{ex1}
\vcenter{
\xymatrix{
S \C \ar@{=}[r] \ar@{=}[d]& S \C \ar[r] \ar@{^{(}->}[d] & 0 \ar[d] \\
S \C \ar@{^{(}->}[r] \ar[d] & \ourdd{n}{1} \ar@{>>}[d] \ar@{>>}[r] & \ourdd{n}{0} \ar@{=}[d] \\
0 \ar[r]  & \ourdd{n}{0} \ar@{=}[r] & \ourdd{n}{0}  
}
}
\end{equation}
and
\begin{equation}\label{ex2}
\vcenter{
\xymatrix{
S\msf{M}_{n} \ar@{=}[r] \ar@{=}[d] & S\msf{M}_{n} \ar[r] \ar@{^{(}->}[d] & 0  \ar[d] \\
S\msf{M}_{n} \ar@{^{(}->}[r] \ar[d] & \ourdd{n}{0} \ar@{>>}[d] \ar@{>>}[r] & \C \ar@{=}[d] \\
0 \ar[r] & \C \ar@{=}[r] & \C
}
}
\end{equation}

By applying the bivariant functor $\kkE^{*}(-,-)$ to the above exact sequences of extensions with (\ref{ex1}) in the first variable and (\ref{ex2}) in the second variable and by Lemma 7.1.5 of \cite{ab:bkkpsc}, we get that the diagram 
\begin{equation*}
\xymatrix{
         & 0 \ar[d] & 0 \ar[d] & 0 \ar[d] & \\
 0 \ar[r] & 0 \ar[r] \ar[d] & \Z_{n} \ar[r] \ar[d] & \Z_{n} \ar@{=}[d] \ar[r] & 0 \\
 0 \ar[r] & \Z \ar[r] \ar[d]^{n} & \kkE ( \mfk{e}_{n,1} , \mfk{e}_{n,0} ) \ar[d] \ar[r] & \Z_{n} \ar[r] \ar[d] & 0 \\
 0 \ar[r]               & \Z \ar@{=}[r] \ar[d] & \Z \ar[d] \ar[r] &  0 \ar[r]  \ar[d] & 0\\
 0 \ar[r]       & \Z_{n} \ar[r] \ar[d]  & \Z_{n} \ar[r] \ar[d] & 0 \ar[r] \ar[d] & 0 \\
                        & 0                     & 0                             & 0                     &
                        }
\end{equation*}
is commutative.  By Korollar 3.4.6 of \cite{ab:bkkpsc} the columns and rows of the above diagram are exact sequences.  Therefore, we have that $\kkE ( \mfk{e}_{ n , 1} , \mfk{e}_{n,0} )$ is isomorphic to $\Z \oplus \Z_{n}$.  

A straightforward computation gives that $\ksix(\mfk{e}_{ n , 0})$ and
$\ksix(\mfk{e}_{ n , 0};\Z_m)$ are given by
\[
\xymatrix{
0\ar[r]&0\ar[r]&\Z\ar[d]&&0\ar[r]&\Z_{(m,n)}\ar[r]&\Z_{n}\ar[d]\\
0\ar[u]&\Z_n\ar[l]&\Z\ar[l]&&0\ar[u]&\Z_{n/(m,n)}\ar[l]&\Z_n\ar[l]
}
\]
 and similarly, for $\mfk{e}_{n,1}$
\[
\xymatrix{
0\ar[r]&0\ar[r]&0\ar[d]&&0\ar[r]&0\ar[r]&\Z_{(n,m)}\ar[d]\\
\Z_n\ar[u]&\Z\ar[l]&\Z\ar[l]&&\Z_{n/(n,m)}\ar[u]&\Z_{n}\ar[l]&\Z_n\ar[l]
}
\]

A map $\ksix(\mfk{e}_{n,1})\oplus\ksix(\mfk{e}_{n,1};\Z_n)\to
\ksix(\mfk{e}_{n,0})\oplus\ksix(\mfk{e}_{n,0};\Z_n)$
is given by a 12-tuple 
\[
((0,0,0,x,a,0),(0,0,b,c,d,0))
\]
where $x\in\Z$ and $a,b,c,d\in\Z_n$. To commute with the maps in the
diagrams as well as the Bockstein maps of type $\rho$ and $\beta$, we
must have $d=a$ and $c=\overline{x}$, and straightforward computations
show that this tuple extends uniquely to an element of 
$\mrm{Hom}_{ \Lambda } ( \tksix ( \mfk{e}_{n,1} ) , \tksix ( \mfk{e}_{
n , 0}  ) )$. Hence this group  is isomorphic to $\Z \oplus \Z_{n}
\oplus \Z_{n}$. Finally, note that no surjection $\Z\oplus \Z_n\to\Z\oplus \Z_n\oplus \Z_n$ exists.
\end{example}

\begin{remark}
The matrices
  $$A=\begin{smallpmatrix}
    1&1&0&0&0&0\\
    1&1&1&0&0&0\\
    0&1&1&0&0&0\\
    0&0&0&1&1&1\\
    0&0&0&1&1&1\\
    1&0&0&1&1&1
  \end{smallpmatrix}
  \quad\text{and}\quad 
  B=\begin{smallpmatrix}
    1&1&1&0&0&0\\
    1&1&1&0&0&0\\
    1&1&1&0&0&0\\
    0&0&0&1&1&0\\
    0&0&0&1&1&1\\
    1&0&0&0&1&1
  \end{smallpmatrix}$$
satisfy condition (II) of Cuntz
  (\cite{jc:cctmc2}). 
  Hence, the Cuntz-Krieger algebras $\mc{O}_A$ and $\mc{O}_B$ are
  purely infinite 
  \cstar-algebras and have exactly one non-trivial ideal. 
  Using the Smith normal form and \cite[Proposition 3.4]{gr:cckasi} we see that
  the six term exact sequence corresponding to $\mc{O}_A$ and
  $\mc{O}_B$ are (isomorphic to) the sequences 
  $$\vcenter{\xymatrix{
      \Z\ar[r]&\Z\ar[r]&\Z_2\ar[d]^0\\
      0\ar[u]&\Z\ar[l]&\Z\ar[l]^\cong
      }}
  \quad\text{and}\quad
  \vcenter{\xymatrix{\Z_2\ar[r]^0&\Z\ar[r]^\cong&\Z\ar[d]\\
      \Z\ar[u]&\Z\ar[l]&0\ar[l]}},$$
respectively.  Using $\kkE$-equivalent extensions, that $\kkE$ is split exact, and arguments similar to
  Example~\ref{ex:nonsplitting}, one easily shows that the natural map
  $\Gamma_{e_1,e_2}$ is not injective for the extensions $e_1$ and
  $e_2$ corresponding to the Cuntz-Krieger algebras $\mc{O}_A$ and
  $\mc{O}_B$, respectively. Similar considerations on 
  $$C=\begin{smallpmatrix}
    1&1&0&0&0&0\\
    1&1&1&0&0&0\\
    0&1&1&0&0&0\\
    0&0&0&1&1&1\\
    0&0&0&1&1&1\\
    1&0&0&1&1&1
  \end{smallpmatrix}
  \quad\text{and}\quad 
  D=\begin{smallpmatrix}
    1&1&0&0&0&0\\
    1&1&1&0&0&0\\
    0&1&1&0&0&0\\
    0&0&0&1&1&0\\
    0&0&0&1&1&1\\
    1&0&0&0&1&1
  \end{smallpmatrix}$$
yield a version of 
  Example~\ref{ex:nonsurjtksix} in the realm of Cuntz-Krieger algebras.
\end{remark}

One may ask if $\Gamma_{ e_{1} , e_{2}}$ is ever surjective and the answer is yes.  If $e_{1}$ is an extension of separable, nuclear \cstar-algebra in $\mc{N}$ such that the $K$-groups of $\ksix ( e_{1} )$ are torsion free, then $\mrm{Hom}_{ \Lambda } ( \tksix ( e_{1} ) , \tksix ( e_{2} ) )$ is naturally isomorphic to $\mrm{Hom}_{ \mc{Z}_{6} } ( \ksix ( e_{1} ) , \ksix ( e_{2} ) )$ such that the composition of $\Gamma_{ e_{1} , e_{2} }$ with this natural isomorphism is the natural map from $\kkE ( e_{1} , e_{2} )$ to $\mrm{Hom}_{ \mc{Z}_{6} } ( \ksix ( e_{1} ) , \ksix ( e_{2} ) )$.  Hence, by the UCT of Bonkat, we have that $\Gamma_{e_{1} , e_{2}}$ is surjective.

\section{Automorphisms of extensions of Kirchberg algebras}\label{sec-automorphisms}

The class $\ror$ of \cstar-algebras considered by R\o rdam in
\cite{mr:ceccstesk} consists of all $C^*$-algebras $A_1$ fitting in an essential
extension $e\colon A_0\hookrightarrow A_1\twoheadrightarrow A_2$ 
where $A_0$ and $A_2$ are Kirchberg algebras 
in $\mc{N}$ (with $A_0$ necessarily being stable).  For convenience we
shall often identify  $e$ and $A_1$ in this setting, as indeed
we can without risk of confusion. As explained in  \cite{mr:ceccstesk} one needs to consider three distinct cases: (1) $A_1$ is stable; (2)
$A_1$ is unital; and (3) $A_1$ is neither stable nor unital.

A functor $F$ is called a \emph{classification functor}, if
$A\cong B\Leftrightarrow F(A)\cong F(B)$ (for all algebras $A$ and
$B$ in the class considered). Such a functor $F$ is called a \emph{strong
classification functor} if every isomorphism from $F(A)$ to $F(B)$ is
induced by an isomorphism from $A$ to $B$ (for all algebras $A$ and $B$ in the
class considered).

R{\o}rdam in \cite{mr:ceccstesk} showed $\ksix$ to be a classification functor for
stable algebras in $\ror$. More recently, the authors in \cite{segr:rccconi} and \cite{grer:rccconiII}
showed that $\ksix$ (respectively $\ksix$ together with the class of the unit) is a
strong classification functor for stable (respectively unital) 
algebras in $\ror$. Moreover, they also showed that $\ksix$ is a classification
functor for non-stable, non-unital algebras in $\ror$.

In this section we will address some questions regarding the
automorphism group of $e$, where $e$ is in $\ror$.  If $e : A_{0} \hookrightarrow A_{1} \twoheadrightarrow A_{2}$ is an essential extension of separable \cstar-algebas,  then an automorphism of $e$ is a triple $( \phi_{0} , \phi_{1} , \phi_{2} )$ such that $\phi_{i}$ is an automorphism of $A_{i}$ and the diagram
\begin{equation*}
\xymatrix{
A_{0} \ar@{^{(}->}[r] \ar[d]^{ \phi_{0} } & A_{1} \ar@{>>}[r] \ar[d]^{ \phi_{1} } & A_{2} \ar[d]^{ \phi_{2} } \\
A_{0}  \ar@{^{(}->}[r]  & A_{1} \ar@{>>}[r]  & A_{2}
}
\end{equation*}
is commutative.  We denote the group of automorphisms of $e$ by
$\mrm{Aut}(e)$.  If $A_{0}$ and $A_{2}$ are simple \cstar-algebras,
then  $\mrm{Aut}(e)$ and $\mrm{Aut}(A_{1})$ are canonically
isomorphic.  Two automorphisms $(\phi_{0} , \phi_{1} , \phi_{2} )$, 
$(\psi_{0} , \psi_{1} , \psi_{2} )$ of $e$ are said to be asymptotically
(approximately) unitarily equivalent if $\phi_{1}$ and $\psi_{1}$ are
asymptotically (approximately) unitarily equivalent.  A consequence of
Kirchberg's results \cite{ek:nkmkna} is that $\kk_{ \mc{E} } ( e , e)$
classifies automorphisms of stable algebras in $\ror$.

In \cite{segr:rccconi} the first and second named authors asked
whether the canonical map from $\mrm{Aut}(e)$ to $\mrm{Aut}_\Lambda (
\tksix ( e ) )$ was surjective, cf.\ \eqref{autoeq1}. We answer this
in the negative as follows:

\begin{proposition}\label{prop:nonexact}
There is a \cstar-algebra $e\in\ror$ with finitely generated $K$-theory
such that (\ref{autoeq1}) is exact only at  
\begin{equation*}
\xymatrix{
\{ 1 \} \ar[r] & \overline{\mrm{Inn}} ( e ) \ar[r] & \mrm{Aut}(e)
}
\end{equation*}
\end{proposition}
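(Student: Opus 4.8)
The plan is to use Kirchberg's classification theorem to reduce the whole statement to properties of the natural transformation $\Gamma_{e,e}$, and then to realize the phenomena of Section~\ref{sec-examples} inside a single extension with simple ends. First I would invoke the consequence of \cite{ek:nkmkna} recorded above: for a stable $e\in\ror$ the assignment $\phi\mapsto[\phi]$ induces an isomorphism $\mrm{Aut}(e)/\overline{\mrm{Inn}}(e)\xrightarrow{\cong}\kkE(e,e)^{-1}$ onto the group of invertible elements, and under this identification the composite $\mrm{Aut}(e)\to\mrm{Aut}_\Lambda(\tksix(e))$ appearing in \eqref{autoeq1} is exactly $\phi\mapsto\Gamma_{e,e}([\phi])=\tksix(\phi)$. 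Thus \eqref{autoeq1} is automatically exact at $\overline{\mrm{Inn}}(e)$ (the left-hand map is the inclusion of a subgroup); it is exact at $\mrm{Aut}(e)$ if and only if $\Gamma_{e,e}$ is injective on $\kkE(e,e)^{-1}$; and it is exact at $\mrm{Aut}_\Lambda(\tksix(e))$ if and only if $\Gamma_{e,e}$ carries $\kkE(e,e)^{-1}$ onto $\mrm{Aut}_\Lambda(\tksix(e))$. It therefore suffices to produce a single stable $e\in\ror$ with finitely generated $K$-theory for which $\Gamma_{e,e}$ fails \emph{both} conditions.

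For non-exactness at $\mrm{Aut}(e)$ the clean point is that invertibility in $\kkE$ is detected already at the level of $\ksix$. By Bonkat's UCT (Theorem~\ref{thm:bonkatuct}) an element $x\in\kkE(e,e)$ is invertible exactly when its induced endomorphism of $\ksix(e)$ is an isomorphism, and this endomorphism is the image of $\Gamma_{e,e}(x)$ under the forgetful map $\mrm{Hom}_\Lambda(\tksix(e),\tksix(e))\to\mrm{Hom}_{\mc{Z}_6}(\ksix(e),\ksix(e))$. Hence if $y\in\ker\Gamma_{e,e}$ is nonzero, then $y$ induces $0$ on $\ksix(e)$, so $1+y$ induces the identity and is invertible; moreover $\Gamma_{e,e}(1+y)=\Gamma_{e,e}(1)+\Gamma_{e,e}(y)=\id$ while $1+y\neq1$. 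By the reduction above, $1+y$ is realized by an automorphism of $e$ which induces the identity on $\tksix(e)$ yet is not approximately inner. So for this half it is enough to arrange $\ker\Gamma_{e,e}\neq0$. For non-exactness at $\mrm{Aut}_\Lambda(\tksix(e))$ I would instead exhibit an element of $\mrm{Aut}_\Lambda(\tksix(e))$ lying outside $\mathrm{im}\,\Gamma_{e,e}$, which will follow from an order obstruction as in Example~\ref{ex:nonsurjtksix}: the Bockstein coefficient data enlarge $\mrm{Hom}_\Lambda$ beyond what any $\kkE$-class can realize.

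The remaining and essential task is to pack both $\ker\Gamma_{e,e}\neq0$ and the surjectivity failure into one extension whose ideal and quotient are simple. The hard part is precisely that $\ror$ forbids direct sums: a direct sum of extensions has non-simple ideal and quotient and so leaves $\ror$, so one cannot take $e=e_1\oplus e_2$ and read off the two failures from off-diagonal corners as in the proof of Proposition~\ref{prop:nonsplitting}. Instead I would build a single essential extension of stable Kirchberg algebras whose six-term sequence carries enough torsion, adapting the Cuntz--Krieger examples of the Remark (whose ideals and quotients \emph{are} simple and purely infinite). Concretely, I would choose a matrix so that $\ksix(e)$ has torsion placed to force, via Theorem~\ref{thm:bonkatuct}, both a nonzero $\mrm{Ext}_{\mc{Z}_6}^1$-contribution to $\ker\Gamma_{e,e}$ (the order-$n^2$ effect of Example~\ref{ex:nonsplitting}) and a genuine gap between $\mrm{Hom}_{\mc{Z}_6}(\ksix(e),\ksix(e))$ and $\mrm{Hom}_\Lambda(\tksix(e),\tksix(e))$ (the extra $\Z_n$-summands of Example~\ref{ex:nonsurjtksix}).

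The main obstacle I anticipate is the explicit double computation: computing $\kkE(e,e)$ through Bonkat's UCT and $\mrm{Hom}_\Lambda(\tksix(e),\tksix(e))$ through the Bockstein structure for this single $e$, and then verifying that the non-injectivity actually survives on a nonzero kernel class and that the omitted $\Lambda$-homomorphism is genuinely an \emph{automorphism} of $\tksix(e)$, so that both exactness failures persist at the level of invertible elements rather than merely at the level of the underlying groups. Choosing $e$ stable finally places it in case~(1) of $\ror$, where Kirchberg's classification applies and the reduction of the first paragraph is valid.
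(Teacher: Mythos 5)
Your reduction of the statement to properties of $\Gamma_{e,e}$ is sound and matches the paper's logic: exactness at $\overline{\mrm{Inn}}(e)$ is automatic, failure at $\mrm{Aut}(e)$ follows from a nonzero $y\in\ker\Gamma_{e,e}$ (since $\kk_{\mc{E}}(\id_e)+y$ is invertible by Bonkat's UCT, induces the identity on $\tksix(e)$, and lifts by Kirchberg to a non--approximately-inner automorphism), and failure at $\mrm{Aut}_\Lambda(\tksix(e))$ follows from an automorphism of $\tksix(e)$ outside the image. The paper fills in one detail you gloss over --- it verifies via Proposition~\ref{prop:isosp} and the groups $G$, $H$ that an approximately inner automorphism really does induce the trivial class, so the lifted automorphism is genuinely not in $\overline{\mrm{Inn}}(e)$ --- but that is a refinement, not a gap.

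The genuine gap is that you never construct $e$. You correctly identify the obstacle ($\ror$ forbids direct sums) but then propose to circumvent it by ``choosing a Cuntz--Krieger matrix with torsion placed to force both failures,'' which is exactly the computation you defer and which is not obviously possible: the paper's own Remark produces \emph{separate} pairs of matrices for the injectivity and surjectivity failures, not one algebra exhibiting both. The paper's actual solution is a transfer argument you have missed: it \emph{does} take the direct sum $e_1=S\mfk{e}_{p,1}\oplus\mfk{e}_{p,1}\oplus\mfk{e}_{p,0}$ (which lies outside $\ror$), uses the inclusions and projections of the summands together with naturality of $\Gamma$ and Examples~\ref{ex:nonsplitting} and~\ref{ex:nonsurjtksix} to show $\Gamma_{e_1,e_1}$ is neither injective nor surjective, and only then invokes R{\o}rdam's range-of-invariant result (Proposition~5.4 of \cite{mr:ceccstesk}) to produce a stable $e\in\ror$ with $\ksix(e)\cong\ksix(e_1)$. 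Bonkat's UCT lifts this $\ksix$-isomorphism to a $\kk_{\mc{E}}$-equivalence, and the resulting commutative square identifies $\Gamma_{e,e}$ with $\Gamma_{e_1,e_1}$, transporting both failures into $\ror$. Without this step (or a completed explicit single-matrix computation in its place) your argument does not produce the required $e$, so the proof is incomplete as it stands.
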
    

Before proving the above proposition we first need to set up some notation.  For $\phi$ in $\mrm{Aut} ( e )$, the element in $\kk_{ \mc{E} } ( e, e )$ induced by $\phi$ will be denoted by $\kk_{ \mc{E} } ( \phi )$ and the element in $\mrm{Hom}_{ \Lambda } ( \tksix ( e ) , \tksix (e) )$ induced by $\phi$ will be denoted by $\tksix ( \phi )$.  We will also need the following result.

\begin{proposition}\label{prop:isosp}
Let $e$ be any extension of separable \cstar-algebras.  Define
\begin{equation*}
\ftn{ \Lambda_{ \mfk{e}_{n,i} , e } }{ \kk_{ \mc{E} } ( \mfk{e}_{n ,i} , e ) }{ \mrm{Hom}_{ \Z } ( \kk_{ \mc{E} } ( \mfk{e}_{n,i} , \mfk{e}_{n,i} ) , \kk_{ \mc{E} }( \mfk{e}_{n,i} , e ) ) }
\end{equation*}
by $\Lambda_{ \mfk{e}_{ n , i } } ( x ) (y) = y \times x$, where $y \times x$ is the generalized Kasparov product (see \cite{ab:bkkpsc}).  Then $\Lambda_{ \mfk{e}_{n,i} , e }$ is an isomorphism for $i=0,1,2$. 
\end{proposition}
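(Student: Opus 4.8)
The plan is to reduce the isomorphism statement, for an \emph{arbitrary} $e$, to a single computation involving only the fixed test extensions $\mfk{e}_{n,i}$, and then to finish by a purely formal argument with the Kasparov product. Write $R_i=\kkE(\mfk{e}_{n,i},\mfk{e}_{n,i})$; this is a ring under the generalized product $\times$, with unit $[\id_{\mfk{e}_{n,i}}]$. The key observation is that $\Lambda_{\mfk{e}_{n,i},e}$ is an isomorphism for \emph{every} $e$ as soon as $R_i$ is infinite cyclic with generator $[\id_{\mfk{e}_{n,i}}]$. Indeed, injectivity is automatic for any ring $R_i$, since $\Lambda_{\mfk{e}_{n,i},e}(x)([\id_{\mfk{e}_{n,i}}])=[\id_{\mfk{e}_{n,i}}]\times x=x$, so $\Lambda_{\mfk{e}_{n,i},e}(x)=0$ forces $x=0$. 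When $R_i=\Z\cdot[\id_{\mfk{e}_{n,i}}]$, evaluation at $[\id_{\mfk{e}_{n,i}}]$ is a canonical isomorphism $\mrm{Hom}_{\Z}(R_i,\kkE(\mfk{e}_{n,i},e))\cong\kkE(\mfk{e}_{n,i},e)$, and the identity just displayed exhibits $\Lambda_{\mfk{e}_{n,i},e}$ as its inverse: writing $y=k\,[\id_{\mfk{e}_{n,i}}]$ one has $\Lambda_{\mfk{e}_{n,i},e}(x)(y)=kx$, so $\Lambda_{\mfk{e}_{n,i},e}(x)$ is determined by and recovers $x$. This reduction uses nothing whatsoever about $e$, which is exactly why the proposition can be stated for an arbitrary extension of separable \cstar-algebras.

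It therefore remains to prove that $R_i\cong\Z$, generated by $[\id_{\mfk{e}_{n,i}}]$, for $i=0,1,2$. Since each $\mfk{e}_{n,i}$ is an extension of separable, nuclear \cstar-algebras in $\mc{N}$, Bonkat's UCT (Theorem~\ref{thm:bonkatuct}) applies and gives
\begin{equation*}
\mrm{Ext}^{1}_{\mc{Z}_{6}}(\ksix(\mfk{e}_{n,i}),\ksix(S\mfk{e}_{n,i}))\hookrightarrow R_i\overset{\Gamma}{\twoheadrightarrow}\mrm{Hom}_{\mc{Z}_{6}}(\ksix(\mfk{e}_{n,i}),\ksix(\mfk{e}_{n,i})).
\end{equation*}
First I would compute the Hom term. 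As recorded in the Examples, each of the six term exact sequences $\ksix(\mfk{e}_{n,i})$ is, up to a rotation and a suspension, the single complex $\Z\overset{n}{\to}\Z\overset{\mrm{red}}{\to}\Z_{n}$ (with the three remaining entries zero). A morphism in $\mc{Z}_{6}$ is thus given by integers $(f_{0},f_{1})$ on the two copies of $\Z$ together with an endomorphism of $\Z_{n}$; compatibility with multiplication by $n$ forces $f_{0}=f_{1}$, and compatibility with reduction then determines the $\Z_{n}$-component. Hence $\mrm{Hom}_{\mc{Z}_{6}}(\ksix(\mfk{e}_{n,i}),\ksix(\mfk{e}_{n,i}))\cong\Z$, and $\Gamma([\id_{\mfk{e}_{n,i}}])$ is visibly the generator (the tuple with $f_{0}=f_{1}=1$).

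The main obstacle is the vanishing of the $\mrm{Ext}^{1}_{\mc{Z}_{6}}$ term: this is precisely what upgrades ``$R_i$ surjects onto $\Z$'' to ``$R_i\cong\Z$ generated by $[\id_{\mfk{e}_{n,i}}]$'', and it is indispensable, since a nonzero (necessarily torsion) kernel would make $R_i\cong\Z\oplus(\text{torsion})$ noncyclic and force $\Lambda_{\mfk{e}_{n,i},\mfk{e}_{n,i}}$ to fail surjectivity. I would establish this vanishing by a direct computation in $\mc{Z}_{6}$, in the same spirit as the diagram chases of Examples~\ref{ex:nonsplitting} and~\ref{ex:nonsurjtksix} but applied to the diagonal pairs: either by resolving $\ksix(\mfk{e}_{n,i})$ by projectives in $\mc{Z}_{6}$ and reading off $\mrm{Ext}^{1}$, or, more in line with the Examples, by invoking Bonkat's explicit exact sequences (Korollar 7.1.6 of \cite{ab:bkkpsc}) to compute $R_i$ outright and observe that it is infinite cyclic. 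The only point needing extra care is $i=2$, where $\mfk{e}_{n,2}$ is a genuinely two-dimensional NCCW extension; but since $\ksix(\mfk{e}_{n,2})$ has the same shape as the other two, the argument is structurally identical. Once $R_i\cong\Z\cdot[\id_{\mfk{e}_{n,i}}]$ is in hand for each $i$, the formal argument of the first paragraph completes the proof.
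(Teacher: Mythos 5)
Your proposal is correct and follows essentially the same route as the paper: reduce everything to the fact that $\kk_{\mc{E}}(\mfk{e}_{n,i},\mfk{e}_{n,i})\cong\Z$ generated by $[\id_{\mfk{e}_{n,i}}]$, get injectivity by evaluating at $[\id_{\mfk{e}_{n,i}}]$, and get surjectivity by noting that a homomorphism out of an infinite cyclic group is determined by its value on the generator. The paper simply asserts the computation of $\kk_{\mc{E}}(\mfk{e}_{n,i},\mfk{e}_{n,i})$ ``by the UCT of Bonkat,'' so your extra discussion of the Hom term and the vanishing of the $\mrm{Ext}^{1}_{\mc{Z}_{6}}$ term, while not fully carried out, already goes beyond the level of detail given there.
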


\begin{proof}
We will only prove the case when $i=0$, the other cases are similar.
By the UCT of Bonkat one shows that 
$\kk_{ \mc{E} } ( \mfk{e}_{n,0} , \mfk{e}_{n,0} )$ is isomorphic to 
$\Z$ and is generated by 
$\kk_{ \mc{E} } ( \id_{ \mfk{e}_{n,0} } )$.  
Therefore, if $\Lambda_{\mfk{e}_{n,0} , e } ( x ) = 0$, 
then 
$$x = \kk_{ \mc{E} } ( \id_{ \mfk{e}_{n,0} } ) \times x 
= \Lambda_{ \mfk{e}_{n,0} ,e } ( x ) 
(\kk_{ \mc{E} } ( \id_{ \mfk{e}_{n,0} } ) ) = 0.$$
Hence, 
$\Lambda_{ \mfk{e}_{n,0} }$ is injective.  Suppose $\alpha$ is a
homomorphism from	$\kk_{ \mc{E} } ( \mfk{e}_{n,0} ,
\mfk{e}_{n,0} )$  to 
$\kk_{ \mc{E} }( \mfk{e}_{n,0} , e ) $.  
Set $x = \alpha ( \kk_{ \mc{E} }( \id_{ \mfk{e}_{ n , 0 } } ) )$.  
Then 
$$\Lambda_{\mfk{e}_{n,0} , e } ( x ) 
( \kk_{ \mc{E} } ( \id_{ \mfk{e}_{n,0} } )) 
= x = \alpha ( \kk_{ \mc{E} } ( \id_{ \mfk{e}_{ n , 0 } } ) ).$$
Therefore, $ \Lambda_{ \mfk{e}_{n,0} , e }$ is surjective. 
\end{proof}
     
\noindent\emph{Proof of Proposition \ref{prop:nonexact}:}

Set $e_{1} = S \mfk{e}_{ p , 1} \oplus \mfk{e}_{ p , 1} \oplus
\mfk{e}_{p , 0}$ where $p$ is a prime number.  Let $\iota_{1}$ be the
embedding of $S \mfk{e}_{p,1}$ to $e_{1}$ and $\pi_{1}$ be the
projection from $e_{1}$ to $\mfk{e}_{p,0}$.  
Note that 
$$\ftn{ \kk_{ \mc{E} } ( \iota_{1} ) \times (-) }%
{ \kk_{ \mc{E} } ( \mfk{e}_{ p , 0 } , S \mfk{e}_{p,1} ) }%
{ \kk_{ \mc{E} } ( \mfk{e}_{p,0} ,  e_{1} ) }$$ 
and 
$$\ftn{ (-) \times \kk_{ \mc{E} } ( \pi_{1} ) }%
{ \kk_{ \mc{E} } ( \mfk{e}_{p,0} , e_{1} ) }%
{ \kk_{ \mc{E} } ( e_{1} , e_{1} ) }$$
are injective homomorphisms.  Hence 
$$\eta_{1} = ( (-)\times \kk_{ \mc{E} } ( \pi_{1} ) ) \circ 
( \kk_{ \mc{E} } ( \iota_{1}) \times (-) )$$
is injective.  Since $\Gamma_{ - , - }$ is natural 
\begin{equation*}
\xymatrix{
\kk_{ \mc{E} } ( \mfk{e}_{p,0} , S\mfk{e}_{p,1} ) \ar[d]_{ \Gamma_{ \mfk{e}_{p,0} , S \mfk{e}_{ p , 1 } } } \ar[r]^{ \eta_{1} } & \kk_{ \mc{E} } ( e_{1} , e_{1} ) \ar[d]_{ \Gamma_{ e_{1} , e_{1} } } \\
\mrm{Hom}_{ \Lambda } ( \tksix ( \mfk{e}_{p,0} ) , \tksix ( S \mfk{e}_{p,1} ) ) \ar[r]_{ \theta_{1} } & \mrm{Hom}_{ \Lambda } ( \tksix(e_{1}) , \tksix ( e_{1} ) )
}
\end{equation*}
is commutative.  By Example \ref{ex:nonsplitting}, $\Gamma_{ \mfk{e}_{p,0} , S \mfk{e}_{ p , 1 } }$ is not injective.  Therefore, $\Gamma_{ e_{1} , e_{1} }$ is not injective.  

Let $\pi_{2}$ be the projection of $e_{1}$ to $\mfk{e}_{p,0}$ and let
$\iota_{2}$ be the embedding of $\mfk{e}_{p,1}$ to $\mfk{e}_{1}$.
Note that 
$$\ftn{ \kk_{ \mc{E} } ( \pi_{2} ) \times (-)  }%
{ \kk_{ \mc{E} } ( e_{1} , e_{1} ) }%
{ \kk_{ \mc{E} } ( e_{1} , \mfk{e}_{p,0} ) }$$ 
and 
$$\ftn{ (-) \times \kk_{ \mc{E} } ( \iota_{2} ) }%
{ \kk_{ \mc{E} } ( e_{1} , \mfk{e}_{p,0} ) }%
{ \kk_{ \mc{E} } ( \mfk{e}_{p,1} , \mfk{e}_{p,0} ) }$$
are surjective homomorphisms.
Therefore, 
$$\eta_{2} = ( (-) \times \kk_{ \mc{E} } ( \iota_{2} ) )
\circ ( \kk_{ \mc{E} } ( \pi_{2} ) \times (-) )$$
is surjective.
Similarly, $\theta_{2} = \tksix ( \iota_{2} ) \circ \tksix ( \pi_{2}
)$ is surjective.  Since $\Gamma_{ - , - }$ is natural,  
\begin{equation*}
\xymatrix{
\kk_{ \mc{E} } ( e_{1} , e_{1} ) \ar[r]^{ \eta_{2} } \ar[d]_{ \Gamma_{ e_{1} , e_{1} } } & \kk_{ \mc{E} } ( \mfk{e}_{ p ,1 } , \mfk{e}_{ p , 0 } ) \ar[d]_{ \Gamma_{ \mfk{e}_{ p ,1 } , \mfk{e}_{p,0} } }  \\
\mrm{Hom}_{ \Lambda  } ( \tksix ( e_{1} ) , \tksix ( e_{1} ) ) \ar[r]_{ \theta_{2} } & \mrm{Hom}_{ \Lambda } ( \tksix ( \mfk{e}_{p,1} ) , \tksix ( \mfk{e}_{p,0} ) )
} 
\end{equation*}
is commutative.  By Example \ref{ex:nonsurjtksix}, $\Gamma_{ \mfk{e}_{p,1} , \mfk{e}_{p,0} }$ is not surjective.  Hence, $\Gamma_{e_{1} , e_{1}}$ is not surjective.

We have just shown that $\Gamma_{ e_{1} , e_{1} }$ is neither
surjective nor injective.  By Proposition 5.4 of \cite{mr:ceccstesk}
there is a stable extension $e : A_{0} \hookrightarrow A_{1}
\twoheadrightarrow A_{2}$ in $\ror$ such that $\ksix ( e ) \cong \ksix ( e_{1}
)$.  By the UCT of Bonkat, Theorem~\ref{thm:bonkatuct}, we are able to
lift this isomorphism to a $\kk_{ \mc{E} }$-equivalence.   
Therefore, 
\begin{equation*}
\xymatrix{
\kk_{ \mc{E} } ( e, e ) \ar[r]^{ \cong } \ar[d]^{ \Gamma_{e,e} } & \kk_{ \mc{E} } ( e_{1} , e_{1} ) \ar[d]^{ \Gamma_{e_{1} , e_{1} } }  \\
\mrm{Hom}_{ \Lambda } ( \tksix ( e ) , \tksix ( e ) ) \ar[r]_{ \cong } & \mrm{Hom}_{ \Lambda } ( \tksix ( e_{1} ) , \tksix ( e_{1} ) )
}
\end{equation*} 
is commutative.  Hence, $\Gamma_{e, e}$ is neither injective nor surjective..

Denote the kernel of the surjective map from 
\begin{equation*}
\mrm{Hom}_{ \Lambda } ( \tksix ( e ) , \tksix ( e ) ) \ \mathrm{to} \ \mrm{Hom}_{ \mc{Z}_{6} } ( \ksix ( e ) , \ksix ( e ) )
\end{equation*}
by $\mrm{Ext}_{ \mrm{six} } ( \ksix ( e ) , \ksix ( S e ) )$.  Note that if $\alpha$ is an element of $\mrm{Hom}_{ \Lambda } ( \tksix ( e ) , \tksix ( e ) )$ such that $\alpha \vert_{ \ksix ( e ) }$ is an isomorphism, then $\alpha$ is an isomorphism.  Since $\Gamma_{e,e}$ is not surjective and 
{\scriptsize
\begin{equation}\label{autoeq2}
\vcenter{\xymatrix{
\mrm{Ext}_{ \mc{Z}_{6} } ( \ksix ( e ) , \ksix ( S e ) ) \ar@{^{(}->}[r] \ar[d]^{ \Gamma_{e,e} } & \kk_{ \mc{E} } ( e, e ) \ar@{>>}[r] \ar[d]^{ \Gamma_{e,e} } & \mrm{Hom}_{ \mc{Z}_{6} } ( \ksix ( e ) , \ksix ( e ) ) \ar@{=}[d] \\
\mrm{Ext}_{ \mrm{six} } ( \ksix ( e ) , \ksix ( S e ) ) \ar@{^{(}->}[r] & \mrm{Hom}_{ \Lambda } ( \tksix ( e ) , \tksix ( e ) ) \ar@{>>}[r] & \mrm{Hom}_{ \mc{Z}_{6} } ( \ksix ( e ) , \ksix ( e ) )
}}
\end{equation}
}

\noindent is commutative, there exists $\beta_{1}$ in 
$\mrm{Ext}_{ \mrm{six} } ( \ksix ( e ) , \ksix ( S e ) )$ 
which is not in the image of $\Gamma_{e,e}$.  
Since $( \tksix( \id_{e}) + \beta_{1} ) \vert_{ \ksix ( e ) } 
= \tksix ( \id_{e} ) \vert_{ \ksix (e) }$, 
we have that $\tksix ( \id_{e} )+ \beta_{1}$ is an
automorphism of $\tksix ( e )$.  
Since $\beta_{1}$ is not in the image of $\Gamma_{e,e}$, $\tksix( \id_{e} ) + \beta_{1}$ is not in the image of $\Gamma_{e,e}$.  Hence, $\tksix( \id_{e} ) + \beta_{1}$ is an automorphism of $\tksix ( e )$ which does not lift to an automorphism of $e$.  Consequently,
\begin{equation*}
\xymatrix{
\mrm{Aut}(e) \ar[r] & \mrm{Aut}_\Lambda ( \tksix ( e ) ) \ar[r] & \{ 1 \}
}
\end{equation*}
is not exact.  

Since the diagram in (\ref{autoeq2}) is commutative and $\Gamma_{e,e}$
is not injective, there exists a nonzero element $\beta_{2}$ of
$\mrm{Ext}_{ \mc{Z}_{6} } ( \ksix ( e ) , \ksix ( S e ) )$ 
such that $\Gamma_{ e, e } ( \beta_{2} ) = 0$.  
Therefore, $\beta_{2} + \kk_{ \mc{E} }( \id_{ e } ) $ is 
an invertible element in $\kk_{ \mc{E} } ( e, e )$ such that 
$\Gamma_{e,e } ( \beta_{2} ) + \tksix ( \id_{e} ) 
= \tksix ( \id_{e})$.  
By Folgerung 4.3 of \cite{ek:nkmkna}, 
$\beta_{2} + \kk_{ \mc{E} } ( \id_{e} )$ 
lifts to an automorphism $\phi$ of $e$.  
So $\tksix ( \phi ) = \tksix ( \id_{e}  )$ in 
$\mrm{Aut}_\Lambda( \tksix ( e ) )$.  

Set 
\begin{eqnarray*}
G &=& \mrm{Hom} ( \kk_{ \mc{E} } ( S \mfk{e}_{p,1} , e_{1} ) , \kk_{ \mc{E} } ( S \mfk{e}_{ p , 1 } , e_{1} ) )\\ &&\oplus \left( \bigoplus_{ i = 0 }^{ 2} \mrm{Hom} ( \kk_{ \mc{E} } ( \mfk{e}_{ p, i } , e_{1} ) , \kk_{ \mc{E} } ( \mfk{e}_{ p, i } , e_{1} )  ) \right) \\
H &=& \mrm{Hom} ( \kk_{ \mc{E} } ( S \mfk{e}_{p,1} , e ) , \kk_{ \mc{E} } ( S \mfk{e}_{ p , 1 } , e ) )\\&& \oplus \left( \bigoplus_{ i = 0 }^{ 2} \mrm{Hom} ( \kk_{ \mc{E} } ( \mfk{e}_{ p, i } , e ) , \kk_{ \mc{E} } ( \mfk{e}_{ p, i } , e )  ) \right)
\end{eqnarray*}

Since $e_{1}$ is equal to $S \mfk{e}_{ p , 1} \oplus \mfk{e}_{ p , 1} \oplus \mfk{e}_{p , 0}$, by Proposition \ref{prop:isosp} 
the map from $\kk_{ \mc{E} } ( e_{1} , e_{1} )$ to $G$ given by $x
\mapsto ( - ) \times x$ is an isomorphism.  Hence, the map from 
$\kk_{  \mc{E} } ( e , e )$ to $H$ given by $x \mapsto ( - ) \times x$
is an isomorphism.  A computation shows that if $\phi$ is in 
$\overline{ \mrm{Inn} } ( e )$, then $\phi$ induces the identity
element in $H$. 
Therefore, $\phi$ is not approximately inner.  We have just shown that

\begin{equation*}
\xymatrix{
\overline{\mrm{Inn}} ( e ) \ar[r] & \mrm{Aut}(e) \ar[r] & \mrm{Aut}_\Lambda ( \tksix ( e ) )
}
\end{equation*}   
is not exact.
\qed

\section*{Acknowledgements}

We wish to thank the Fields Institute for the excellent working
conditions the second and third named authors enjoyed there when the
project was initiated, and for the similarly excellent conditions we
all enjoyed during the fall of 2007, where this work
was completed. We are also grateful to Marius Dadarlat for several
helpful discussions. Furthermore, the first named author wishes to
thank The Danish Foundation for Research in Natural Sciences for
financial support, and the second named author gratefully acknowledges the
support from ``Valdemar Andersens rejselegat for matematikere'' and
the Faroese Research Council. Finally, the third named author notes
his gratitude towards the hospitality enjoyed at the University of
Copenhagen at a visit during the spring of 2007.

\renewcommand{\sc}{}

\end{document}